\newcommand{\bd}{\begin{description}}
\newcommand{\ed}{\end{description}}
\newcommand{\bi}{\begin{itemize}}
\newcommand{\ei}{\end{itemize}}
\newcommand{\be}{\begin{enumerate}}
\newcommand{\ee}{\end{enumerate}}
\newcommand{\beq}{\begin{equation}}
\newcommand{\eeq}{\end{equation}}
\newcommand{\beqs}{\begin{eqnarray*}}
\newcommand{\eeqs}{\end{eqnarray*}}
\newcommand{\rn}[1]{{\color{red} #1}}
\definecolor{DarkGreen}{rgb}{0.2, 0.6, 0.3}
\newcommand{\gn}[1]{{\color{DarkGreen} #1}}
\newtheorem{theorem}{Theorem}[section]
\newtheorem{lemma}{Lemma}[section]
\newtheorem{case}{Case}
\newtheorem{claim}{Claim}
\newtheorem{fact}{Fact}
\newtheorem{proposition}{Proposition}[section]
\begin{document}
\title{\textbf{Gallai Ramsey number for double stars} \footnote{Supported by the National
Science Foundation of China (Nos. 11601254, 11551001, 11161037,
61763041, 11661068, and 11461054) and the Science Found of Qinghai
Province (Nos.  2016-ZJ-948Q, and 2014-ZJ-907) and the  Qinghai Key
Laboratory of Internet of Things Project (2017-ZJ-Y21).} }

\author{
Gyula O.H. Katona \footnote{Alfred Renyi Institute of Mathematics, Hungarian Academy of Sciences, Budapest Reaaltanoda utca 13-15, 1053},
Colton Magnant\footnote{Department of Mathematics, Clayton State University, Morrow, GA, 30260, USA. {\tt dr.colton.magnant@gmail.com}} \footnote{Academy of Plateau Science and Sustainability, Xining, Qinghai 810008, China},
Yaping Mao \footnote{School of Mathematics and Statistis, Qinghai Normal University, Xining, Qinghai 810008, China. {\tt maoyaping@ymail.com}} \footnotemark[4],
Zhao Wang\footnote{Corresponding Author - College of Science, China Jiliang University, Hangzhou 310018, China. {\tt wangzhao@mail.bnu.edu.cn}}}
\date{}
\maketitle

\begin{abstract}
Given a graph $G$ and a positive integer $k$, the \emph{Gallai-Ramsey number} is defined to be the minimum number of vertices $n$ such that any $k$-edge coloring of $K_n$ contains either a rainbow (all different colored) copy of $G$ or a monochromatic copy of $G$. In this paper, we obtain general upper and lower bounds on the Gallai-Ramsey numbers for double stars $S(n,m)$, where $S(n,m)$ is the graph obtained from the union of two stars $K_{1,n}$ and $K_{1,m}$ by adding an edge between their centers. We also provide the sharp result in some cases.
\end{abstract}

\section{Introduction}

In this work, we consider only colorings of the edges of graphs. A coloring of a graph $G$ is called \emph{rainbow} if no two edges in $G$ have the same color.

Edge colorings of complete graphs that contain no rainbow triangle have very interesting and somewhat surprising structure. In 1967, Gallai \cite{Gallai} examined this structure under the guise of transitive orientations (a translation of this paper is available in \cite{RamirezReed}). In honor of Gallai's result (Theorem~\ref{Thm:G-Part}), such colorings are are called \emph{Gallai-colorings}. The result was restated in \cite{GyarfasSimonyi} in the terminology of graphs and it can also be traced back to \cite{CameronEdmonds}. For the following statement, a partition is called trivial if it has only one part.

\begin{theorem}[\cite{Gallai, GyarfasSimonyi,CameronEdmonds}]\label{Thm:G-Part}
In any coloring of a complete graph containing no rainbow triangle, there exists a nontrivial partition of the vertices (called a Gallai-partition) such that there are at most two colors on the edges between the parts and only one color on the edges between each pair of parts.
\end{theorem}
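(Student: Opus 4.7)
The plan is to proceed by induction on $n$, with the base case $n \leq 2$ immediate. For $n \geq 3$, given a Gallai-coloring of $K_n$ with color set $C$, the main idea is to locate a subset $S \subseteq C$ of size at most two such that the spanning subgraph $H_S$ on edges \emph{not} colored in $S$ is disconnected. The connected components of $H_S$ then form the parts of the sought partition, and every inter-part edge must have its color in $S$, giving at most two colors between parts.

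If $|C| \leq 2$, take $S = C$, and the singleton partition itself works. If some single color $c$ has $H_{\{c\}}$ disconnected, take $S = \{c\}$: the components of $H_{\{c\}}$ give a Gallai partition in which only the color $c$ appears between parts, so the one-color-per-pair condition is automatic.

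The main obstacle is the remaining case: $|C| \geq 3$ and $H_{\{c\}}$ is connected for every $c \in C$. Here the no-rainbow-triangle hypothesis must be used nontrivially: for any edge $uv$ of color $a$ and any third vertex $w$, the triangle on $\{u,v,w\}$ uses at most two colors, sharply constraining the colors of $uw$ and $vw$. Propagating these local constraints across the graph, one aims to identify a pair $\{a,b\} \subseteq C$ together with a nontrivial partition into \emph{modules} (subsets $M$ such that for every $v \notin M$ the edges from $v$ to $M$ share a common color), whose quotient is colored by only $a$ and $b$ with a single color between each pair of modules. Executing this structural analysis rigorously is Gallai's original substitution-decomposition argument for graphs and constitutes the principal technical challenge; typical strategies choose $a$ as an extremal color (for instance, one minimizing the number of edges in $G_a$ or one realized on a carefully selected edge) and analyze how the components of $G_a$ interact via a second color $b$ until a disconnecting pair is forced.
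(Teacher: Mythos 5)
This statement is quoted by the paper from the literature (Gallai, Gy\'arf\'as--Simonyi, Cameron--Edmonds) and is not proved there, so your attempt can only be judged on its own merits. As written, it is not a proof: it is an outline that stops exactly at the point where the theorem's content lies. You reduce to the case where $|C|\geq 3$ and, for every color $c$, the graph on the edges \emph{not} colored $c$ is connected, and then you state that ``one aims to identify a pair $\{a,b\}$ together with a nontrivial partition into modules\dots'' and that executing this ``constitutes the principal technical challenge.'' That remaining case is the entire theorem; the easy cases you do handle (two colors total, or a single color whose complement disconnects) are exactly the situations where no work is needed. Announcing an induction on $n$ and then never invoking the inductive hypothesis is a further sign that the argument has not actually been carried out. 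A standard way to close the gap is the Gy\'arf\'as--Simonyi induction: remove a vertex $v$, take a Gallai partition of $K_{n-1}$ by induction, and analyze how the colors from $v$ to the parts interact under the no-rainbow-triangle constraint, either absorbing $v$ into a part or refining the partition; none of that analysis appears in your sketch.

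There is also a structural flaw in the reduction you do write down. You claim that if $S\subseteq C$ with $|S|\leq 2$ is such that the spanning subgraph $H_S$ on edges not colored in $S$ is disconnected, then the components of $H_S$ form the desired partition. When $|S|=2$ this only yields the weaker conclusion that at most two colors appear between parts; it does \emph{not} give the one-color-per-pair condition, since a single pair of components of $H_S$ can easily be joined by edges of both colors of $S$. So even granting the existence of such an $S$, the partition you propose need not be a Gallai partition, and the passage to a module decomposition whose quotient uses one color per pair is an additional (and again unproved) step, not a cosmetic refinement.
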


The induced subgraph of a Gallai colored complete graph constructed by selecting a single vertex from each part of a Gallai partition is called its \emph{reduced graph}. By Theorem~\ref{Thm:G-Part}, the reduced graph is a $2$-colored complete graph.

Given two graphs $G$ and $H$, let $R(G, H)$ denote the $2$-color Ramsey number for finding a monochromatic $G$ or $H$, that is, the minimum number of vertices $n$ needed so that every red-blue coloring of $K_{n}$ contains either a red copy of $G$ or a blue copy of $H$. As an extension, let $R_{k}(H)$ denote the $k$-color Ramsey number for finding a monochromatic copy of $H$, that is, the minimum number of vertices $n$ so that every $k$-coloring of $K_{n}$ contains a monochromatic copy of $H$. Although the reduced graph of a Gallai partition uses only two colors, the original Gallai-colored complete graph could certainly use more colors. With this in mind, we consider the following generalization of the Ramsey numbers. Given two graphs $G$ and $H$, the \emph{$k$-colored Gallai-Ramsey number} $gr_k(G:H)$ is defined to be the minimum integer $m$ such that every $k$-coloring of the complete graph on $m$ vertices contains either a rainbow copy of $G$ or a monochromatic copy of $H$. With the additional restriction of forbidding the rainbow copy of $G$, it is clear that $gr_k(G:H)\leq R_k(H)$ for any graph $G$.

Although the concept of forbidding a rainbow triangle and looking for monochromatic subgraphs can be traced back to \cite{GyarfasSimonyi} and beyond, the study of Gallai-Ramsey numbers in their current form comes from \cite{FMO14}, where the authors used Theorem~\ref{Thm:G-Part} to produce sharp Gallai-Ramsey numbers for several small graphs along with some general bounds for paths and cycles. The subject has also been expanded through several other publications including but not limited to \cite{FM11, GSSS10}. In particular, in \cite{GSSS10}, the following general behavior of Gallai-Ramsey numbers was established.

\begin{theorem}[\cite{GSSS10}]\label{Thm:Dichotomy}
Let $H$ be a fixed graph with no isolated vertices. If $H$ is bipartite and not a star, then $gr_{k}(K_{3} : H)$ is linear in $k$. If $H$ is not bipartite, then $gr_{k}(K_{3} : H)$ is exponential in $k$.
\end{theorem}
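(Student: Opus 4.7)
\emph{Plan.} The statement is a dichotomy, so I would prove the two halves separately: a linear upper bound when $H$ is bipartite but not a star, and an exponential lower bound when $H$ is not bipartite.

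For the exponential lower bound, I would exhibit an explicit iterated construction. Start with a $2$-coloring of $K_m$ in colors $1$ and $2$ that avoids a monochromatic $H$, where $m = R(H,H)-1$; call this coloring $G_2$. Define $G_k$ recursively by taking two vertex-disjoint copies of $G_{k-1}$ and joining all cross edges by a brand new color $k$. Two properties must be verified: (i) $G_k$ is a Gallai coloring, because any triangle has at least two vertices inside a common copy, and the two cross edges from those vertices to an outside vertex carry the single new color, so only two distinct colors appear on such a triangle; and (ii) $G_k$ contains no monochromatic $H$, since for $j<k$ the color-$j$ subgraph is a disjoint union of two copies of the corresponding subgraph of $G_{k-1}$ (handled by induction when $H$ is connected, and by a component-chasing extension otherwise), while the color-$k$ subgraph is the complete bipartite graph $K_{|G_{k-1}|,|G_{k-1}|}$, which cannot contain a non-bipartite $H$. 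Since $|V(G_k)| = 2^{k-2}m$, this yields $gr_k(K_3:H) \geq 2^{k-2}(R(H,H)-1)+1$, which is exponential in $k$.

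For the linear upper bound, I would induct on the number of colors $k$, with the base case $k=2$ handled by the finiteness of $R(H,H)$. Given a Gallai $k$-coloring of $K_n$ containing no monochromatic $H$, apply Theorem~\ref{Thm:G-Part} to obtain a Gallai-partition into parts $V_1,\dots,V_s$ whose reduced graph is $2$-colored in, say, red and blue. The crucial feature of a bipartite non-star $H$ is that both bipartition classes have size at least two, so $H \subseteq K_{a,b}$ with $a,b \geq 2$. An appropriate bipartite Ramsey-type argument on the reduced graph then shows that either a monochromatic $K_{a,b}$ arises (yielding a monochromatic $H$ by picking transversal vertices from the underlying parts), or the number of parts $s$ is bounded by a constant depending only on $H$. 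In the latter case the largest part $V_i$ retains a positive fraction of the vertices, and the plan is to argue that inside $V_i$ at least one color from the current palette can be discarded or absorbed using the two reduced-graph colors, so that the recursion contributes only a constant number of vertices per additional color.

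The main obstacle is the quantitative form of the recursion in the upper-bound argument: all $k$ colors may still appear inside a single part of the Gallai-partition, so naively restricting to a part does not reduce the palette and would produce only an exponential bound. The fix is to extract additional structure from the two reduced-graph colors---either by directly embedding $H$ into a bipartite pattern spanning several parts via $H\subseteq K_{a,b}$, or by an absorbing argument that effectively removes one color from the palette at each inductive step---so that the induction progresses additively rather than multiplicatively, delivering the required linear growth in $k$.
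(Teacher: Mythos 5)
First, a point of orientation: the paper does not prove this statement at all --- it is Theorem~\ref{Thm:Dichotomy}, quoted from \cite{GSSS10} as a known result --- so there is no in-paper proof to compare against. Judging your proposal on its own merits: the exponential lower bound is essentially correct and complete. The doubling construction (two copies of $G_{k-1}$ joined in a new color) is a Gallai coloring, the new color class is complete bipartite and hence cannot contain a non-bipartite $H$, and the old color classes are disjoint unions of $H$-free graphs; for disconnected $H$ the clean way to finish is to pass to a non-bipartite component $H'$ of $H$ and note $gr_k(K_3:H)\geq gr_k(K_3:H')$, rather than ``component-chasing.'' This gives $gr_k(K_3:H)\geq 2^{k-2}(R(H,H)-1)+1$, which suffices for ``exponential.''

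The linear upper bound, however, is where all the work lies, and your proposal stops at exactly the hard step. You correctly identify the obstacle --- restricting to a part of the Gallai partition does not shrink the palette, so a naive induction on $k$ via the largest part is multiplicative --- but the ``fix'' you describe (``an absorbing argument that effectively removes one color from the palette at each inductive step'') is named, not executed. The device that actually makes this work, and which this very paper uses in Propositions~\ref{Pro:RamseyUpperk4} and~\ref{Prop:RamseynmUp}, is to take a \emph{maximal} set $T$ of vertices each of which sends a single color to all of $G\setminus T$ (with $|G\setminus T|$ kept above a fixed threshold), partitioned as $T=\bigcup_i T_i$ by that color. Writing $H\subseteq K_{s,t}$ with $2\leq s\leq t$, each $|T_i|\leq s-1$ (else a monochromatic $K_{s,\,n+1}\supseteq H$ appears), so $|T|\leq (s-1)k$ accounts for the entire linear-in-$k$ growth; one then must show that the residual graph $G'=G\setminus T$ has size bounded by a constant depending only on $H$, using the maximality of $T$ together with the Gallai partition of $G'$ (two big parts force a monochromatic $K_{s,t}$ between them; one big part with a small complement contradicts maximality of $T$; all-small parts force a monochromatic $K_{s,t}$ by a counting/Ramsey argument on the two reduced-graph colors). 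Without carrying out this bounding of $|G'|$, your argument does not yet yield linearity, so as written the upper-bound half is a plan rather than a proof.
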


We refer the interested reader to \cite{MR1670625} for a dynamic survey of small Ramsey numbers and \cite{FMO14} for a dynamic survey of rainbow generalizations of Ramsey theory, including topics like Gallai-Ramsey numbers.

The \emph{double star} $S(n,m)$ for integers $n\geq m\geq 0$ is the graph obtained from the union of two stars $K_{1,n}$ and $K_{1,m}$ by adding the edge between their centers. In this work, we prove bounds on the Gallai-Ramsey number of all double stars, along with sharp results for several small double stars.

One of the tools we will use is the following by Grossman et al. \cite{GrossmanHararyKlawe}, who obtained the exact value of classical Ramsey number of double stars in most cases.

\begin{theorem}[\cite{GrossmanHararyKlawe}]\label{Lem:RamseyS(n,m)}
$$
R(S(n,m),S(n,m)) = \begin{cases}
\max\{2n+1,n+2m+2\} & \text{ if $n$ is odd and $m\leq 2$}\\
\max\{2n+2,n+2m+2\} & \text{ if $n$ is even or $m\geq 3$,}\\
~ & \text{ and $n\leq \sqrt{2}m$ or $n\geq 3m$.}\\
\end{cases}
$$
\end{theorem}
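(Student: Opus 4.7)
The plan is to prove matching lower and upper bounds separately, with the lower bound coming from explicit edge-colorings and the upper bound from a degree-based analysis.

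For the lower bound on the first term ($2n+1$ when $n$ is odd, $2n+2$ when $n$ is even), I would $2$-color $K_{2n}$ or $K_{2n+1}$ so that each color class is regular (or nearly regular) of degree at most $n$. When $n$ is even, $K_{2n+1}$ splits into two $n$-regular graphs; when $n$ is odd, $K_{2n}$ splits into an $n$-regular and an $(n-1)$-regular graph. In either case, no vertex has monochromatic degree exceeding $n$, so neither color contains a $K_{1,n+1}$, let alone a monochromatic $S(n,m)$. For the second term $n+2m+2$, in the range $n \geq 2m+1$ (which covers $n \geq 3m$) I would take the red graph on $n+2m+1$ vertices to be the disjoint union $K_{2m+1} \sqcup K_n$. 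Then the red graph has maximum degree $\max\{2m,\,n-1\} \leq n$ and the blue graph, which is the complete bipartite graph $K_{2m+1,n}$, has maximum degree $\max\{n,\,2m+1\} \leq n$, so neither color contains $K_{1,n+1}$. In the complementary range $n \leq \sqrt{2}m$, a more delicate near-regular or block-structured coloring is needed to realize the same lower bound.

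For the upper bound, let $N$ denote the claimed Ramsey number and fix an arbitrary red/blue coloring of $K_N$. Partition the vertex set as $A \cup B$, where $A = \{v : d_R(v) \geq n+1\}$; since $d_R(v)+d_B(v) = N-1 \geq 2n+1$, every vertex of $B$ satisfies $d_B(v) \geq n+1$. I would then hunt for a monochromatic $S(n,m)$ as follows. If there is a red edge $uv$ with $u \in A$ and $d_R(v) \geq m+1$, I build a red $S(n,m)$ by selecting $n$ red neighbors of $u$ (other than $v$) disjoint from $m$ red neighbors of $v$ (other than $u$). This selection is feasible provided the red neighborhoods $N_R(u)$ and $N_R(v)$ together contain at least $n+m+1$ vertices outside $\{u,v\}$, which follows whenever $N \geq n+2m+2$ by a careful inclusion–exclusion. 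If no such red edge exists, then every red neighbor of every $v \in A$ has red-degree at most $m$, which forces most edges within $A$ to be blue, and a symmetric argument on the blue side produces a blue $S(n,m)$.

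The main obstacle will be the tight case in which $u$ and $v$ barely clear their degree thresholds and their red neighborhoods overlap almost completely, making it difficult to locate $n+m$ distinct leaves. Here the bound $N \geq n+2m+2$ must be exploited to produce extra vertices outside $N_R(u) \cup N_R(v)$; those vertices are blue-heavy, so the attack switches color and we look for a blue double star there instead. Handling the parity condition on $n$ (which governs whether $2n+1$ or $2n+2$ is the right lower bound) and the range hypotheses $n \leq \sqrt{2}m$ versus $n \geq 3m$ (which decide which term of the max dominates) will force a split into cases according to which maximum is attained, and within each case a careful balancing of the leaf counts to complete the monochromatic $S(n,m)$. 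This case analysis, rather than any single clever trick, is where the bulk of the work lies, and it is also the reason the theorem is stated only within the indicated ranges of $n$ and $m$.
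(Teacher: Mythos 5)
This statement is not proved in the paper at all: it is Theorem~\ref{Lem:RamseyS(n,m)}, quoted verbatim from Grossman, Harary and Klawe \cite{GrossmanHararyKlawe} and used as a black box, so there is no in-paper argument to compare yours against. Judged on its own, your sketch captures the standard opening moves (degree-splitting constructions for the lower bound, a two-center degree analysis for the upper bound), but it has two concrete gaps that are precisely where the content of the theorem lives.

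First, the lower bound. Your constructions only ever produce colorings in which every vertex has monochromatic degree at most $n$, i.e.\ colorings with no monochromatic $K_{1,n+1}$. By a parity count this is impossible on $K_{2n+1}$ when $n$ is odd: an $n$-regular graph on $2n+1$ vertices would have an odd number of odd-degree vertices. Yet the theorem asserts $R(S(n,m),S(n,m))\geq 2n+2$ for odd $n$ with $m\geq 3$ and $n\geq 3m$, which requires a good coloring of $K_{2n+1}$ that necessarily contains vertices of monochromatic degree $n+1$; one must instead arrange that every such high-degree vertex has all its neighbors in that color of degree at most $m$ in that color. This is exactly what separates the two lines of the statement (the $m\leq 2$ versus $m\geq 3$ dichotomy for odd $n$), and your framework cannot produce it. Similarly, your $K_{2m+1}\sqcup K_{n}$ construction fails in the range $m+1\leq n\leq 2m$ (hence throughout $n\leq \sqrt{2}m$): the blue side is $K_{2m+1,n}$, a vertex on the $n$-side has blue degree $2m+1\geq n+1$, its blue neighbors have blue degree $n\geq m+1$, and the two sides supply $n$ and $m$ disjoint leaves, so blue contains $S(n,m)$. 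You flag this case as "more delicate" but give nothing for it. Second, the upper bound's key step is unjustified: from $N\geq n+2m+2$ you cannot conclude $|N_R(u)\cup N_R(v)\setminus\{u,v\}|\geq n+m$, since nothing prevents $N_R(u)=N_R(v)$ with $|N_R(u)|=n+1$, in which case the union is too small for any $m\geq 1$. The genuine proof must analyze the vertices outside $N_R(u)\cup N_R(v)$ and the edges they send back, switching colors in a controlled way; naming this as "the main obstacle" does not discharge it, and it is the bulk of the Grossman--Harary--Klawe argument.
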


This result was recently extended by Norin et al.~\cite{Norin} in the following result.

\begin{theorem}[\cite{Norin}]
For positive integers $m$ and $n$ with $m \leq n \leq 1.699(m + 1)$,
$$
R(S(n, m), S(n, m)) \leq n + 2m + 2.
$$
\end{theorem}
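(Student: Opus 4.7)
The proof will be by contradiction. Set $N = n+2m+2$ and assume $K_N$ is $2$-edge-colored red and blue without any monochromatic $S(n,m)$. Write $r(v)$ and $b(v)$ for the red and blue degrees of a vertex $v$, so that $r(v)+b(v) = N-1 = n+2m+1$.

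The first step is a maximum-degree reduction: I would prove $r(v), b(v) \le n+m$ for every $v$. Suppose instead $r(v) \ge n+m+1$ for some $v$ and let $A := N_R(v)$. Consider the red subgraph on $A$. If it contains a vertex $u$ of red degree at least $m$ inside $A$, then $r(u) \ge m+1$ and the spine $uv$ supports a red $S(n,m)$: select $n$ leaves from $A\setminus\{u\}$ (which has size $\ge n+m$) and $m$ disjoint leaves from $N_R(u)\setminus\{v\}$. Otherwise the red subgraph on $A$ has maximum degree at most $m-1$, so the blue subgraph on $A$ has minimum degree at least $|A|-m \ge n+1$, and any blue edge in this subgraph spines a blue $S(n,m)$. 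Symmetric reasoning gives $b(v) \le n+m$, whence $m+1 \le r(v), b(v) \le n+m$ for every $v$.

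Next I would derive a sharp overlap constraint: for any red edge $uv$ with $r(u) \ge n+1$ and $r(v) \ge m+1$, the union $N_R(u)\cup N_R(v)$ has size at most $n+m+1$, for otherwise one can pick $n$ red leaves at $u$ and $m$ red leaves at $v$ pairwise disjoint and both disjoint from $\{u,v\}$, producing a red $S(n,m)$. The analogous statement holds in blue. I would then partition $V(K_N) = V_R \cup V_B$ by majority color (breaking ties arbitrarily), assume WLOG that $|V_R| \ge \lceil N/2\rceil$, and focus on the red graph on $V_R$, where each vertex has red degree in the range $[\lceil (N-1)/2\rceil,\, n+m]$.

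The final step is a double count. Summing $r(v)$ over $v\in V_R$ gives a lower bound on the red edges incident to $V_R$, while the overlap constraint sharply restricts the number of red edges inside $V_R$ whose endpoints both satisfy $r(\cdot) \ge n+1$, since each such edge forces $N_R(u)$ and $N_R(v)$ to share nearly all of their vertices. I expect the bookkeeping---tracking separately the red edges inside $V_R$, those from $V_R$ to $V_B$, and the overlap losses---to reduce to a quadratic inequality in $n/(m+1)$ whose positive root is $\approx 1.699$. The main obstacle is to keep this accounting tight enough to recover that constant: any slack yields a weaker conclusion such as the $\sqrt{2}$ threshold already appearing in Theorem~\ref{Lem:RamseyS(n,m)}, so matching the Norin et al.\ constant will be the crux of the argument.
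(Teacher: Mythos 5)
This statement is not proved in the paper at all: it is imported verbatim from Norin, Sun and Zhao \cite{Norin}, so there is no in-paper argument to compare yours against. Judged on its own, your proposal is a plan rather than a proof, and the gap is exactly where the theorem lives. The entire content of the result is the constant $1.699$, and your final step says only that you ``expect the bookkeeping \dots to reduce to a quadratic inequality in $n/(m+1)$ whose positive root is $\approx 1.699$.'' No such inequality is derived, no weighting scheme is specified, and you yourself flag that ``matching the Norin et al.\ constant will be the crux.'' A double count of red degrees over a majority-color class, together with the overlap constraint $|N_R(u)\cup N_R(v)|\le n+m+1$, is a plausible opening, but nothing in the sketch shows that the slack actually closes at $1.699(m+1)$ rather than at $\sqrt{2}\,m$ or $3m$ (the thresholds already covered by Theorem~\ref{Lem:RamseyS(n,m)}); until that computation is exhibited, there is no proof.

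There is also a concrete error in the one step you do carry out. In the maximum-degree reduction, after assuming $r(v)\ge n+m+1$ and ruling out a vertex of red degree $\ge m$ inside $A=N_R(v)$, you claim that the blue graph on $A$ has minimum degree $\ge |A|-m\ge n+1$ and hence ``any blue edge in this subgraph spines a blue $S(n,m)$.'' It does not: a double star needs $n+m$ leaves disjoint from the spine, but $|A\setminus\{x,y\}|$ can equal $n+m-1$ when $|A|=n+m+1$, so the star cannot even fit inside $A$. Worse, the union $\bigl(N_B(x)\cup N_B(y)\bigr)\cap A$ is only guaranteed to have size $|A|-m-1\ge n$, which falls short of the required $n+m$ whenever the two blue neighborhoods nearly coincide. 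Repairing this forces you to use leaves outside $A$, over which you have no control, or to raise the degree threshold, which then feeds back into (and weakens) the final count. Both the preliminary lemma and the main estimate therefore need real work before this can be called a proof.
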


In this paper, we first obtain the exact value of the Gallai Ramsey number of double stars under an additional assumption that $n\geq 6m+7$.

\begin{theorem}\label{Thm:Ramseyn-24m}
Let $n,m,k$ be three integers with $m\geq 1$, $k\geq 3$ and $n\geq 6m+7$. Then
$$
gr_{k}(K_{3} : S(n,m))=\begin{cases}
5\cdot\frac{n}{2} + m(k-3)+1 & \text{ if $n$ is even,}\\
5\cdot\frac{n-1}{2} + m(k-3)+ 2 & \text{ if $n$ is odd.}
\end{cases}
$$
\end{theorem}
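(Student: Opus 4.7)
The plan is to establish matching lower and upper bounds, with the lower bound given by an explicit construction and the upper bound obtained by induction on the number of colors $k$, whose base case $k=3$ is handled by a direct Gallai-partition analysis.

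For the lower bound I would blow up a $5$-cycle. When $n$ is even, take five parts $V_1,\dots,V_5$ each of size $n/2$, coloring every edge between consecutive parts $V_i,V_{i+1}$ (indices mod $5$) with color $c_1$, every edge between non-consecutive parts $V_i,V_{i+2}$ with color $c_2$, and every edge inside a single $V_i$ with color $c_3$. When $n$ is odd I inflate one of the five parts to size $(n+1)/2$ and keep the others at $(n-1)/2$. For each further color $c_j$ with $4 \le j \le k$, I append a new part $W_j$ of exactly $m$ vertices, coloring every external edge from $W_j$ to the previously built vertices with $c_j$ and every internal edge of $W_j$ with $c_3$. Every triangle then uses at most two colors, ruling out a rainbow $K_3$. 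A degree check shows that in every color class $c$ every edge $uv$ satisfies $d_c(u) \le n$ or $d_c(v) \le m$: in $c_1, c_2$ every vertex has color-degree at most $n$; in $c_3$ every vertex has color-degree at most $\max\{n/2, m\}-1 < n+1$; and in each $c_j$ with $j\geq 4$ any vertex outside $W_j$ has exactly $m$ color-$c_j$ edges. This gives $5n/2 + m(k-3)$ vertices for $n$ even and $5(n-1)/2 + 1 + m(k-3)$ for $n$ odd, matching the claimed value minus one.

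For the upper bound I would induct on $k$. Assuming the bound for $k-1$ colors, consider a $k$-coloring of $K_N$ with $N$ the claimed value, avoiding both a rainbow $K_3$ and any monochromatic $S(n,m)$. By Theorem~\ref{Thm:G-Part}, fix a Gallai partition $\{P_1,\dots,P_r\}$ whose reduced graph uses only two colors, say $c_1, c_2$. If $r \geq R(S(n,m),S(n,m))$, Theorem~\ref{Lem:RamseyS(n,m)} yields a monochromatic $S(n,m)$ in the reduced graph that lifts to the original by choosing one vertex per leaf part, together with the central edge. Otherwise $r \leq 2n+1$. Since the other $k-2$ colors appear only within individual parts, I would identify a color $c^*$ whose support is concentrated on at most $m$ vertices; deleting these vertices leaves a $(k-1)$-colored $K_{N-m}$ on $gr_{k-1}(K_3 : S(n,m))$ vertices that still avoids both a rainbow $K_3$ and a monochromatic $S(n,m)$, contradicting the inductive hypothesis. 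The hypothesis $n \geq 6m+7$ controls this step by keeping the $m+1$ and $n+1$ degree thresholds well separated, so that the ``thin'' color is forced to exist.

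The main obstacle is the base case $k=3$. Here the Gallai partition yields two external colors and one internal color, and the internal color can be distributed through the parts in many patterns, making the case analysis delicate. I would split on the size $r$ of the reduced graph: for $r \geq 2n+2$ the Ramsey argument above already finishes, while for $r \leq 2n+1$ at least one part $P_i$ must be comparatively large, and a careful structural analysis shows that either a monochromatic $S(n,m)$ appears inside $P_i$ together with its external connections, or the whole configuration is forced to match the extremal $C_5$-blow-up template used in the lower bound. Since that template has at most $5n/2$ vertices when $n$ is even and at most $5(n-1)/2 + 1$ when $n$ is odd, this rigidity contradicts the choice of $N$. Pushing this uniqueness argument through tightly is the hardest part of the proof, and it is precisely where the assumption $n \geq 6m+7$ is used, as the large gap between $n+1$ and $m+1$ rules out the intermediate configurations that would otherwise obstruct the extremal characterization.
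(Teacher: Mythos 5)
Your lower bound is correct and is essentially the paper's construction (a blow-up of the unique $2$-coloured $K_{5}$ with no monochromatic triangle, with one part inflated when $n$ is odd, plus $k-3$ pendant $m$-cliques joined in the new colours), so that half is fine. The upper bound, however, has two genuine gaps. First, the inductive step rests on the claim that some colour $c^{*}$ has all of its edges covered by at most $m$ vertices, so that deleting those vertices yields a $(k-1)$-colouring of $K_{N-m}$. You give no argument for this, and the natural route does not deliver it: from a Gallai partition one only gets that the $k-2$ colours not used between parts live \emph{inside} parts, and a single large part can carry many edges of such a colour (e.g.\ a large matching) with small maximum degree, so that no vertex set of size $m$ covers it even though no monochromatic $S(n,m)$ arises. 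What can actually be proved is weaker: if $T_{i}$ is the set of vertices joined to the rest entirely in colour $i$, then $|T_{i}|\leq m$ and every remaining vertex has at most $m$ incident edges of colour $i$ --- but colour $i$ survives the deletion of $T_{i}$, so your induction never closes. The paper sidesteps induction on $k$ entirely: it removes the whole set $T=\bigcup T_{i}$ (up to $(k-2)m$ vertices), observes that at most $k-2$ of the $T_{i}$ are nonempty, and reruns the $3$-colour argument on the remaining $\geq \frac{5n}{2}-m$ vertices using the degree-$\leq m$ bound on the extra colours.

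Second, the base case $k=3$ is where all the content of the theorem lies, and you have not supplied it. ``A careful structural analysis shows that either a monochromatic $S(n,m)$ appears \dots or the configuration is forced to match the extremal $C_{5}$-blow-up'' states the goal rather than proving it. The paper's argument here runs to several pages: it first shows every part of the Gallai partition has order at least $\lfloor n/2\rfloor$ or at most $m$, then splits on the number $r$ of large parts ($r=0,1,2$ and $r\geq 3$, plus $t\leq 3$), with about ten sub-claims (no monochromatic triangle on three large parts in the reduced graph, $|H_{1}|+|H_{3}|\leq n$, bounds on $\sum_{i\geq 4}|H_{i}|$, etc.), and the hypothesis $n\geq 6m+7$ enters through explicit degree counts such as $\frac{5(n-1)}{2}+1-m\geq 2(n+m+1)$. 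Note also that with $N\approx \frac{5n}{2}$ and up to $2n+1$ parts, your premise that ``at least one part must be comparatively large'' fails: all parts may have order at most $m$, and that configuration needs its own counting argument (the paper's Case $r=0$). As written, the proposal establishes the lower bound but not the upper bound.
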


Next, we obtain the upper and lower bounds for Gallai Ramsey number of all double stars. 

\begin{theorem}\label{Thm:Ramseynm}
Let $n,m,k$ be three integers with $1 \leq m \leq n \leq 6m + 6$, $k\geq 3$. Then
$$
gr_{k}(K_{3} : S(n,m))\leq \begin{cases}
\max\{5\cdot\frac{n+2}{2},2n+6m+7\} + m(k-3)+1 & \text{ if $n$ is even,}\\
\max\{5\cdot\frac{n+1}{2},2n+6m+7\}+ m(k-3)+ 2 & \text{ if $n$ is
odd.}
\end{cases}
$$
and
$$
gr_{k}(K_{3} : S(n,m))\geq \begin{cases}
\max\{5\cdot\frac{n}{2},n+3m+1\} + m(k-3)+1 & \text{ if $n$ is even,}\\
\max\{5\cdot\frac{n-1}{2},n+3m+1\}+ m(k-3)+ 2 & \text{ if $n$ is
odd.}
\end{cases}
$$
\end{theorem}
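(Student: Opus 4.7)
The statement splits into an upper and lower bound, and I would handle them separately, with an induction on the number of colors $k$ driving the $m(k-3)$ term in each direction.

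\textbf{Lower bound.} My plan is to produce two base $3$-coloring constructions on $5n/2$ and $n+3m+1$ vertices respectively (for $n$ even; $n$ odd is parallel, with parity adjustments), and then iterate a ``new-color'' gadget adding $m$ vertices per extra color. The $5n/2$ construction is a $C_5$-blow-up: partition the vertices into blocks $V_1, \ldots, V_5$ of size $n/2$, coloring edges inside each $V_i$ with color $1$, edges between $V_i$ and $V_{i+1\bmod 5}$ with color $2$, and edges between $V_i$ and $V_{i+2\bmod 5}$ with color $3$. Every triangle meets at most two blocks in this arrangement, so no rainbow triangle appears; the maximum monochromatic degree is $n$ (in colors $2$ and $3$) or $n/2 - 1$ (in color $1$), strictly below the $n+1$ needed for the high-degree center of any monochromatic $S(n,m)$. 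The $n+3m+1$ construction begins with a $2$-color extremal configuration on $n+2m+1$ vertices witnessing $R(S(n,m),S(n,m)) \geq n+2m+2$, then appends $m$ new vertices joined to the old in a fresh third color and to one another in an already-used color. For the iteration from $k-1$ to $k$ colors, I would append a set $U$ of $m$ vertices, coloring every $U$--(old) edge with the new color $k$ and every $U$--$U$ edge with a previously used color. A short check shows no rainbow triangle is introduced, no monochromatic $S(n,m)$ can be completed in color $k$ (old vertices have color-$k$ degree exactly $m$, failing the $m+1$ bound for the low-degree center), and no monochromatic $S(n,m)$ is created in an old color (a new vertex has no old-color edge to any old vertex).

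\textbf{Upper bound.} I would induct on $k$. For $k \geq 4$, the plan is to show that any Gallai $k$-coloring of $K_N$, with $N$ equal to the claimed upper bound, either contains a monochromatic $S(n,m)$ or admits the deletion of $m$ vertices after which the remaining $N - m$ vertices use at most $k-1$ colors, at which point the induction hypothesis closes the case. A plausible mechanism is that in the absence of a monochromatic $S(n,m)$, one color must concentrate on a vertex set of size exactly $m$ (otherwise enough same-color leaves would be available to build $S(n,m)$), and this concentrated set can be excised. For the base case $k=3$, I apply the Gallai partition of Theorem~\ref{Thm:G-Part}, reducing to a $2$-coloring of a $K_t$ on the parts. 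If $t \geq R(S(n,m),S(n,m))$, Theorem~\ref{Lem:RamseyS(n,m)} supplies a monochromatic $S(n,m)$ in the reduced graph, which lifts to the original by selecting one representative per used part. Otherwise $t$ is small, forcing at least one large part; I split further on $t$ and on the $2$-coloring of the reduced $K_t$, building the $S(n,m)$ by pairing a large part with its monochromatic reduced-graph neighborhood and, where necessary, invoking the $2$-color Ramsey bound inside a part.

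\textbf{Main obstacle.} The hard part will be the base case $k=3$, specifically the Gallai partitions with few large parts, where the $S(n,m)$ must be built straddling parts. Pinning down an $n$-center with sufficiently many same-color leaves and an adjacent $m$-center with $m$ further leaves disjoint from the first set will require careful degree bookkeeping across the partition structure. I expect this bookkeeping to produce precisely the $2n+6m+7$ term in the upper bound (reflecting two near-extremal parts each of size roughly $n+3m+O(1)$) and similarly the $5(n+2)/2$ term, accounting respectively for the $n+3m+1$ versus $2n+6m+7$ slack in the small-$n$ regime and the $5n/2$ versus $5(n+2)/2$ slack in the large-$n$ regime between the lower and upper bounds.
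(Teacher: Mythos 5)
Your lower bound is exactly the paper's construction: the $C_5$ blow-up for the $5\cdot\frac{n}{2}$ term, and a $2$-colour Ramsey-extremal graph on $n+2m+1$ vertices with an $m$-clique appended per additional colour for the $n+3m+1+m(k-3)$ term; your verification is sound. One small slip: it is not true that every triangle in the blow-up meets at most two blocks --- a triangle can meet three blocks, but it still receives at most two colours because only two colours appear between blocks, so the conclusion (no rainbow triangle) survives.

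The genuine gap is the induction step of the upper bound for $k \geq 4$. You propose that a Gallai $k$-colouring of $K_N$ with no monochromatic $S(n,m)$ must have some colour ``concentrated on a vertex set of size exactly $m$,'' which can then be excised to leave a $(k-1)$-colouring on $N-m$ vertices. This dichotomy is unsupported and false in general: a colour class avoids containing $S(n,m)$ as soon as no edge of that colour joins a vertex of degree at least $n+1$ to a vertex of degree at least $m+1$, so it may for instance be a spanning subgraph of maximum degree at most $m$ (a large matching, or an $m$-regular bipartite graph arising between sub-parts of a nested Gallai partition). Such a colour has no vertex cover of size $m$ and cannot be destroyed by deleting $m$ vertices, and ruling out the coexistence of several such colours at the critical order is essentially the content of the theorem, so the proposed reduction begs the question. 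The paper never eliminates a colour: it takes a maximum set $T$ of vertices each joined monochromatically to $G\setminus T$, shows each colour class $T_i$ satisfies $|T_i|\leq m$ and (Claim~\ref{Claim12gen}) that at most $k-2$ colours occur on $T$, so $|T|\leq (k-2)m$, and then proves (Fact~\ref{Fact:m-1gen}) that every colour represented in $T$ has maximum degree at most $m$ inside $G'=G\setminus T$. This reduces the problem in $G'$ to a two-colour degree count: a vertex $v$ has at least $n+2m+4$ red neighbours, each of which then has at most $m$ incident red edges and hence many blue ones, and a blue edge among these neighbours centres a blue $S(n,m)$. Your $k=3$ base case is in the spirit of the paper's argument (Gallai partition, cases on the number of large parts, degree bookkeeping), and the reduced-graph Ramsey shortcut you mention is legitimate, but without a working $k\geq 4$ step the proof does not close.
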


The outline of the paper is as follows. Section~\ref{Sec2} contains the exact value of Gallai-Ramsey number for $S(n,m)$ under the condition $n\geq 6m+5$. We provide the upper and lower bounds of Gallai-Ramsey number for all double stars in Section~\ref{Sec3}.

\section{Proof of Theorem \ref{Thm:Ramseyn-24m}}\label{Sec2}

We first give the lower bound on the Gallai-Ramsey number for
$S(n,m)$.
\begin{lemma}\label{Lemma:k3Lower}
Let $n,m$ be two integers with $n\geq m$. Then
$$
gr_{3}(K_{3} : S(n,m))\geq \begin{cases}
5\cdot\frac{n}{2} +1 & \text{ if $n$ is even,}\\
5\cdot\frac{n-1}{2} + 2 & \text{ if $n$ is odd.}
\end{cases}
$$
\end{lemma}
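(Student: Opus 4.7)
The plan is to prove the lower bound by exhibiting, for each parity of $n$, a $3$-edge-coloring of $K_N$ that contains no rainbow triangle and no monochromatic copy of $S(n,m)$, where $N=5n/2$ when $n$ is even and $N=(5n-3)/2$ when $n$ is odd. These are precisely one less than the claimed bounds, so producing such colorings suffices.

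The construction I would use is the standard blow-up of $C_5$. Partition the vertex set into five parts $V_1,\dots,V_5$, each of size $n/2$ in the even case, or with four parts of size $(n-1)/2$ and one enlarged part of size $(n+1)/2$ in the odd case. Color edges inside a single part with color $3$ (green), edges between $V_i$ and $V_{i+1}$ (indices mod $5$) with color $1$ (red), and edges between $V_i$ and $V_{i+2}$ with color $2$ (blue); thus the red and blue subgraphs are two edge-disjoint blow-ups of a $C_5$, while the green subgraph is a disjoint union of cliques. I would first rule out rainbow triangles by casing on how the three vertices distribute across the parts: if all three lie in one part the triangle is entirely green; if two lie in one part and one in another, the triangle has one green edge and two edges of the single between-part color; if they lie in three distinct parts, every edge of the triangle is red or blue. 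In every case only two colors appear.

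For the absence of a monochromatic $S(n,m)$, the key observation is that $S(n,m)$ contains a vertex of degree $n+1$, so it is enough to verify that every color class has maximum degree at most $n$. The green subgraph is a disjoint union of cliques each of size at most $(n+1)/2$, hence its maximum degree is at most $(n-1)/2<n$. A vertex in $V_i$ has red degree $|V_{i-1}|+|V_{i+1}|$, which equals $n$ in the even case; in the odd case the sum is $n-1$ when neither neighboring part is the enlarged one and $n$ when exactly one is, so the bound $n$ is never exceeded. The blue subgraph is handled identically by the symmetry of the coloring. Consequently no color class has a vertex of degree $n+1$, no monochromatic $S(n,m)$ can appear, and the desired lower bound follows. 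I do not anticipate a serious obstacle; the only point requiring care is the size bookkeeping in the odd case, to confirm that enlarging a single part by $1$ keeps every between-part degree at most $n$.
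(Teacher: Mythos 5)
Your construction is exactly the one in the paper: a blow-up of the unique triangle-free $2$-coloring of $K_5$ (pentagon and pentagram between the parts, the third color inside the parts, with one part enlarged by one vertex when $n$ is odd), followed by the observation that every color class has maximum degree at most $n$ while $S(n,m)$ forces a vertex of monochromatic degree $n+1$. The argument is correct and matches the paper's proof up to a relabeling of which color is placed inside the parts; your explicit degree bookkeeping and rainbow-triangle case check just spell out details the paper leaves implicit.
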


\begin{proof}
We prove this result by inductively constructing a coloring of
$K_{\ell}$ where
$$
\ell = \begin{cases}
5\cdot\frac{n}{2}  & \text{ if $n$ is even,}\\
5\cdot\frac{n-1}{2} + 1 & \text{ if $n$ is odd.}
\end{cases}
$$
which contains no rainbow triangle and no monochromatic copy of
$S(n,m)$. Suppose this coloring uses colors red, blue and green. For
even $n$, we let $G_{1}$ be a red complete graph on $\frac{n}{2}$
vertices. We construct $G_3$ by making five copies of $G_{1}$ and
inserting edges of blue and green between the copies to form a
blow-up of the unique $2$-colored $K_{5}$ which contains no
monochromatic triangle.

For odd $n$, we let $G_{1}$ be a red complete graph on
$\frac{n-1}{2}$ vertices, and let $G'_{1}$ be a red complete graph on
$\frac{n+1}{2}$ vertices. We construct $G_3$ by making four copies
of $G_{1}$ and one copy of $G'_{1}$ and inserting edges of blue and
green between the copies to form a blow-up of the unique $2$-colored
$K_{5}$ which contains no monochromatic triangle.

This coloring clearly contains no rainbow triangle and, since there
is no vertex of degree at least $n + 1$, there
can be no monochromatic copy of $S(n,m)$, completing the
construction.
\end{proof}

Next, we give the upper bound on the Gallai-Ramsey number for
$S(n,m)$ with $3$ colors.
\begin{proposition}\label{Thm:RamseyS(n,m)k3}
Let $n,m$ be two integers with $m\geq 1$ and $n\geq 6m+7$. For the
double star $S(n,m)$, we have
$$
gr_{3}(K_{3} : S(n,m)) = \begin{cases}
5\cdot\frac{n}{2} +1 & \text{ if $n$ is even,}\\
5\cdot\frac{n-1}{2} + 2 & \text{ if $n$ is odd.}
\end{cases}
$$
\end{proposition}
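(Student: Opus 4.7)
The lower bound is furnished by Lemma~\ref{Lemma:k3Lower}; we focus on the matching upper bound. Let $N$ denote the claimed value and suppose, toward a contradiction, that some $3$-edge-colouring of $K_N$ contains no rainbow triangle and no monochromatic $S(n,m)$. By Theorem~\ref{Thm:G-Part} there is a Gallai partition $V(K_N)=H_1\cup\cdots\cup H_t$ with $h_1\geq\cdots\geq h_t$, whose reduced graph $R^*$ is $2$-coloured, say in red and blue. Any monochromatic $S(n,m)$ in $R^*$ lifts to one in $K_N$ by choosing a representative from each part, and since $n\geq 6m+7\geq 3m$, Theorem~\ref{Lem:RamseyS(n,m)} gives $R(S(n,m),S(n,m))\leq 2n+2$; hence $t\leq 2n+1$.

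The argument splits according to the structure of $R^*$. If $R^*$ contains a monochromatic (say red) triangle on parts $H_a,H_b,H_c$ with $h_a\geq h_b\geq h_c$, we seek a red $S(n,m)$ inside the tripartite red blow-up on $H_a\cup H_b\cup H_c$: placing centres $u\in H_b$ and $v\in H_a$, a greedy allocation of $u$'s $n$ red leaves (preferring $H_a$, spilling into $H_c$ when $h_a-1<n$) together with $v$'s $m$ red leaves (preferring $H_b$) delivers the desired double star provided $h_a+h_b+h_c\geq n+m+2$ and the two largest sizes sum to at least $n+1$. Since $R(K_3,K_3)=6$, every $t\geq 6$ produces such a triangle; when the inequalities above fail for every monochromatic triangle of $R^*$, those triangles cover only a small fraction of $N$ vertices, and one iterates on the complementary parts (or recurses inside a dominant part via the same lifting).

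When $R^*$ has no monochromatic triangle we must have $t\leq 5$; the cases $t\in\{2,3,4\}$ are handled directly since some majority colour on $R^*$ then provides a dense monochromatic bipartite (or near-bipartite) blow-up into which $S(n,m)$ is readily embedded. The main subcase is $t=5$, where $R^*$ equals the unique pentagon/pentagram $2$-colouring of $K_5$, both colour classes forming $5$-cycles. A short pigeonhole argument from $\sum h_i=N$ forces at least two parts of size $\geq\lceil n/2\rceil+1$: for odd $n$, if every $h_i\leq (n-1)/2$ then $\sum h_i\leq 5(n-1)/2<N$, giving some $h_a\geq (n+1)/2$, after which the remaining $2n-1$ vertices among four parts each bounded by $(n-1)/2$ cannot fit, forcing a second $h_b\geq (n+1)/2$. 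Any two such ``large'' parts lie at cyclic distance $1$ or $2$ around the pentagon, so their pair sum $h_a+h_b\geq n+1$ equals the monochromatic degree of any vertex in the unique part $H_k$ that (in whichever of the two colours places $H_a,H_b$ at distance $2$) is adjacent to both. Taking $v\in H_k$ as the $n$-centre and $u$ in the other same-colour neighbour of $H_k$ as the $m$-centre, the hypothesis $n\geq 6m+7$ guarantees that $u$'s remaining pool of $m$-leaves has size at least $n-2\geq m+1$, completing the embedding.

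The main obstacle is the bookkeeping in the pentagon subcase: one must trace how the two large parts sit relative to the pentagon/pentagram so as to identify in which colour the forced monochromatic $S(n,m)$ appears, and to re-verify the small-centre leaf-count under each relative position. The bound $n\geq 6m+7$ is used sharply here, as it is precisely what guarantees that the two ``short'' neighbouring parts always supply enough disjoint leaves for the $m$-star, regardless of the specific layout of the partition sizes around the pentagon.
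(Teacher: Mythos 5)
Your overall frame (Gallai partition, $2$-coloured reduced graph, case split on the reduced structure) matches the paper's, but several key steps do not hold up, and the most important missing ingredient is the third colour. In a Gallai partition the third colour survives \emph{inside} the parts, and the hardest configurations are exactly those where the red/blue structure between parts is too sparse to yield $S(n,m)$: there one must argue that some vertex has at least $n+m+1$ incident edges in a single colour, and then that one of its neighbours in that colour also has degree at least $m+1$ in it. Concretely, when $t=2$ with $|H_2|\leq m$ the cross-edges form only a $K_{|H_1|,m}$, which contains no $S(n,m)$ whatsoever, so nothing is ``readily embedded''; the paper instead shows every vertex of $H_1$ has at most $m$ incident blue edges, extracts a vertex with at least $n+m+1$ green (or red) edges, bounds the green degree of each of its green neighbours by $m$, and finally finds a red edge both of whose ends have red degree at least $2n$. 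Your proposal has no analogue of this two-step degree argument, and the same machinery is needed when all parts are small and when a single part dominates.

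Two further concrete failures. First, your pigeonhole in the pentagon case is wrong: the claim that ``the remaining $2n-1$ vertices cannot fit into four parts of size at most $(n-1)/2$'' assumes $h_a=(n+1)/2$ exactly; if the largest part has order, say, $n$, then $N-h_a=(3n-1)/2\leq 4\cdot\frac{n-1}{2}$ for $n\geq 3$ and no second large part is forced. A dominant part with $h_a\geq N-2m$ is a genuine configuration, and it must be killed by the interior third-colour argument, not by pigeonhole. Second, the step ``when the inequalities fail for every monochromatic triangle of $R^*$, those triangles cover only a small fraction of $N$ and one iterates on the complementary parts (or recurses inside a dominant part)'' is not an argument: the iteration is never specified, and the complementary parts need not carry any exploitable monochromatic structure. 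The paper replaces all of this with a classification of parts (each has order at least $\lfloor n/2\rfloor$ or at most $m$, since a medium part would be red- or blue-joined to at least $n+1$ vertices) followed by an explicit case analysis on the number $r$ of large parts, including a delicate treatment of the red $P_3$ spanned by large parts when $r\geq 3$. As written, your proposal secures only the lower bound (which is Lemma~\ref{Lemma:k3Lower}); the upper bound is not established.
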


\begin{proof}
Let $G$ be a coloring of $K_\ell$ where
$$
\ell=\begin{cases}
5\cdot\frac{n}{2} +1 & \text{ if $k$ is even,}\\
5\cdot\frac{n-1}{2} + 2 & \text{ if $k$ is odd.}
\end{cases}
$$

Since $G$ is a Gallai coloring, it follows from Theorem~\ref{Thm:G-Part} there is a Gallai partition of $V(G)$. Suppose red and
blue are the two colors appearing in the Gallai partition and let green be the other color. Let
$H_1,H_2,\ldots,H_t$ be the parts in this partition and choose such
a partition where $t$ is as small as possible. Without loss of generality, suppose these parts are labeled so that $|H_{1}| \geq |H_{2}| \geq \dots \geq |H_{t}|$.

\begin{claim}\label{Claim1}
For each part $H_i$ of $K_{\ell}$, $|H_i|\geq
\lfloor\frac{n}{2}\rfloor$ or $|H_i|\leq m$.
\end{claim}
\begin{proof}
Suppose that there exists one part, say $H_j$, and $m+1\leq
|H_j|\leq \lfloor\frac{n}{2}\rfloor-1$. Let $A$ be the set of parts
with red edges to $H_j$, and $B$ be the set of parts with blue edges
to $H_j$. Without loss of generality, $|A|\geq |B|$. Since
$|A|+|B| \geq \ell - (\lfloor \frac{n}{2} \rfloor - 1) > 2n$, it follows that one of these sets has order at least $n + 1$, say $|A|\geq n+1$. Then $A \cup H_{j}$ contains a red copy of $K_{n+1,m+1}$, which contains a red copy of $S(n,m)$, a contradiction.
\end{proof}

Let $r$ be the number of ``large'' parts $H_{i}$ of the Gallai partition with $|H_i|\geq \frac{n-1}{2}$, so $|H_{r}| \geq \frac{n - 1}{2}$ but $H_{r + 1} < \frac{n - 1}{2}$. Otherwise call the parts ``small''. We break the proof into cases based on the values of $t$ and $r$.

\setcounter{case}{0}
\begin{case}
$2\leq t\leq 3$.
\end{case}

By the minimality of $t$, we may assume
$t=2$, say with corresponding parts $H_1$ and $H_2$. Suppose all
edges between $H_1$ and $H_2$ are blue. If $|H_2|\geq m+1$, then to avoid a
monochromatic $S(n,m)$, we have $|H_1|\leq n$ and $|H_2|\leq n$, and
hence $|G|=|H_1|+|H_2|<\ell$, a contradiction. Thus, we may assume $|H_2|\leq m$.

\begin{claim}\label{Claim2}
For any vertex $x$ of $H_1$, there are at most $m$ incident blue edges in $G$.
\end{claim}

\begin{proof}
Assume, to the contrary, that there exists a vertex $u$ in $H_1$ with at least $m+1$ incident blue edges.
Choose any vertex $w\in H_2$ and $u_1,u_2,\ldots,u_m\in N_G^{blue}(u) \setminus \{w\}$ and
$v_1,v_2,\ldots,v_n\in H_1-N_G^{blue}(u)$. Then the blue edges in
$\{uu_i\,|\,1\leq i\leq m\}\cup \{wv_j\,|\,1\leq j\leq n\} \cup \{uw\}$ form a
blue copy of $S(n,m)$, a contradiction.
\end{proof}

From Claim \ref{Claim2}, for any vertex $v$ of $H_1$, the total number of green or red edges incident to $v$ in $H_1$ is at least
$$
5\cdot \frac{n-1}{2}+2-m-1\geq 2(n+m+1),
$$
since $n\geq 6m+7$. Choose $v \in H_{1}$. Without loss of generality, we assume that the
number of edges incident to $v$ in green is more than the number of edges incident to $v$ in red. Then there are at least $n+m+1$ green edges incident with $v$, and call the corresponding vertices $v_{1},v_{2},\ldots,v_{n+m+1}$.

\begin{claim}\label{Claim3}
For any $v_{i}$ with $1\leq i\leq n+m+1$, the number of green edges incident to $v_{i}$ in $G$ is at most $m$.
\end{claim}
\begin{proof}
Assume, to the contrary, that there exists a vertex $v_j$ with at least $m+1$ incident green edges. Choose $X\subseteq N_G^{green}(v_j) \setminus \{v\}$ with $|X|=m$. Then the edges
from $v_j$ to $X$ and the edge $vv_j$ and the edges from $v$ to
$\{vv_i\,|\,1\leq i\leq n+m+1\}-X-v_j$ form a green copy of $S(n,m)$, a
contradiction.
\end{proof}

From Claims~\ref{Claim2} and~\ref{Claim3}, for any $v_{i}$ with $1\leq i\leq n+m+1$, the number of red edges incident to $v_{i}$ within $H_1$ is at least $2n$. Since $v$ has at least $n + m + 1$ incident green edges, there exists a red edge within $G[N_{G}^{green}(v)]$. We choose two vertices, say $v_i,v_{j}$, such that the
edge $v_iv_{j}$ is red. Then the red edges incident to $v_i,v_{j}$
form a red copy of $S(n,m)$, a contradiction.

\begin{case}
$t\geq 4$ and $r\geq 3$.
\end{case}

\begin{claim}\label{Claim4}
Any choice of $3$ large parts in $\mathscr {H}=\{H_1,\ldots,H_r\}$ do not
form a monochromatic triangle in the reduced graph.
\end{claim}

\begin{proof}
Assume, to the contrary, that there exist $3$ parts in $\mathscr
{H}=\{H_1,\ldots,H_r\}$ that form a monochromatic triangle in the reduced
graph, say red triangle $H_1H_2H_3$. To avoid a $2$-partition in blue (thereby contradicting the minimality of $t$), there
exists at least one red edge from $\bigcup_{i=1}^{3}H_i$ to
$\bigcup_{i=4}^{t}H_i$, that is, there exists a vertex in
$\bigcup_{i=4}^{t}H_i$, say $v$, and a part $H_1$ or $H_2$ or $H_3$,
say $H_1$, such that all edges from $v$ to $H_1$ are red.

First suppose $n$ is even. Then we choose a vertex $u\in H_1$. Then $m$ of the edges from $v$ to
$H_1-u$, the edge $uv$, and $n$ of the edges from $u$ to $H_2\cup H_3$ form a red copy of $S(n,m)$, a contradiction.

Next suppose $n$ is odd. Suppose further that there exist two vertices in $x,y\in
\bigcup_{i=4}^{t}H_i$ and a part in $\{H_1,H_2,H_3\}$, say $H_1$,
such that the edges from $\{x,y\}$ to $H_1$ are red. Then we choose a vertex
$u\in H_1$. Then $n$ of the edges from $u$ to $H_2\cup H_3\cup \{y\}$,
$m$ of the edges from $x$ to $H_1 \setminus \{u\}$, and the edge from $u$ to $x$ form a red copy of $S(n,m)$, a contradiction.

We suppose now that for each part $H_{i}$ (for $i$ with $1 \leq i \leq 3$), there is at most one vertex $x_{i}$ in $\bigcup_{i=4}^{t}H_i$, with red edges to $H_{i}$.
We claim that
$|H_i|\leq \frac{n+1}{2}$ for $i=1,2,3$. Assume, to the contrary,
that there exists a part, say $H_1$, such that $|H_1|\geq
\frac{n+3}{2}$. Then we arbitrarily choose vertices $u\in H_1$ and $v\in H_2$. Then the edges from $u$ to $H_2 \setminus \{v\}$, the edges from $v$ to $(H_1\setminus \{u\})\cup H_3$ form a red copy of $S(n,m)$, a contradiction. This means that $\sum_{i=1}^{3}|H_i|\leq \frac{3n+3}{2}$, and so $\sum_{i=4}^{t}|H_i|-3\geq \frac{5(n-1)}{2}-\frac{3n+3}{2}\geq m+4-3=m+1$. Then there is a blue copy of $K_{n+1,m+1}$ on edges between $\bigcup_{i=4}^{t}H_i$ and $\bigcup_{i=1}^{3}H_i$ which contains a blue copy of $S(n, m)$, a contradiction.
\end{proof}

From Claim \ref{Claim4}, any choice of $3$ parts in $\mathscr
{H}=\{H_1,\ldots,H_r\}$ form a monochromatic $P_3$ in the reduced
graph. Without loss of generality (ignoring previous assumptions on the relative sizes between $H_{1}$, $H_{2}$, and $H_{3}$), let $P_3=H_1H_2H_3$ be red.

\begin{claim}\label{Claim5}
$n - 1 \leq |H_1|+|H_3|\leq n$ and $|H_2|\leq n$.
\end{claim}
\begin{proof}
If either $|H_1|+|H_3|\geq n+1$ or $|H_2|\geq n+1$, then the subgraph induced on the red edges from $H_{2}$ to $H_{1} \cup H_{3}$ contains a red copy of $K_{m + 1, n + 1}$, which contains a red copy of $S(n, m)$, a contradiction.
\end{proof}

From Claim \ref{Claim5}, we have $\sum_{i=4}^{t}|H_i| \geq \ell - 2n \geq \lfloor
\frac{n}{2}\rfloor$. Furthermore, we have the following claim.

\begin{claim}\label{Claim6}
$\lfloor \frac{n}{2}\rfloor\leq \sum_{i=4}^{t}|H_i|\leq n+1$.
\end{claim}

\begin{proof}
Assume, to the contrary, that $\sum_{i=4}^{t}|H_i|\geq n+2$. To avoid a blue copy of $K_{n+1,m+1}$, there are at most $n$ vertices in $\bigcup_{i=4}^{t}H_i$ with blue edges to $H_2$. Then there are at least $2$ vertices say $v_{1}, v_{2}$ in $\bigcup_{i=4}^{t}H_i$ with red edges to $H_2$. Since the edges from $H_2$ to $H_1\cup H_3 \cup \{v_{1}, v_{2}\}$ are all red, this contains a red copy of $K_{m + 1, n + 1}$, which contains a red copy of $S(n, m)$, a contradiction.
\end{proof}

From the proof of Claim~\ref{Claim6}, there is at most one vertex in $\bigcup_{i=4}^{t}H_i$ with red edges to $H_2$, and hence there are at least $\lfloor \frac{n}{2}\rfloor-1$ vertices in $\bigcup_{i=4}^{t}H_i$ with blue edges to $H_2$.

\begin{claim}\label{Claim7}
$|H_1|+|H_2|\leq n$ and $|H_2|+|H_3|\leq n$.
\end{claim}

\begin{proof}
By symmetry, we only prove $|H_1|+|H_2|\leq n$. Assume, to the
contrary, that $|H_1|+|H_2|\geq n+1$. Let $A$ be the set of parts (other than $H_{1}$ and $H_{3}$) with red edges to $H_2$ and $B$ be the set of parts with blue edges
to $H_2$. Since there is at most one vertex in $\bigcup_{i=4}^{t}H_i$ with red edges to $H_2$, it follows that $|A|\leq 1$, and hence $|B|\geq \lfloor \frac{n}{2}\rfloor-1$. In order to avoid a blue copy of $S(n,m)$, the edges from $B$ to $H_1$ must all be red, and hence $|B|+|H_2|\leq n$ (by Claim~\ref{Claim5}). Then $\sum_{i=1}^{t}|H_i|= |H_1|+|H_2|+|H_3|+|A|+|B|\leq 2n+|A|\leq 2n+1<|G|$, a contradiction.
\end{proof}

Suppose that $n$ is even. From Claims~\ref{Claim5} and~\ref{Claim7}, $|H_1|+|H_2|+|H_3|\leq \frac{3n}{2}$. Recall that for each $i$ with $i=1,2,3$, we have $|H_i|\geq \frac{n}{2}$. This means that $|H_1|=|H_2|=|H_3|=\frac{n}{2}$, and hence $\sum_{i=4}^{n}|H_i|=n+1$. Suppose that there is a vertex $v_{1}$ in $\bigcup_{i=4}^{n}H_i$ with red edges to $H_2$. Since the edges from $H_{2}$ to $H_{1} \cup H_{3} \cup \{v_{1}\}$ are all red, this contains a red copy of $K_{m + 1, n + 1}$, which contains a red copy of $S(n, m)$, a contradiction.
This means that all edges from $\bigcup_{i=4}^{n}H_i$ to $H_2$ are blue. Since $|H_2|=\frac{n}{2} \geq m + 1$ and $\sum_{i=4}^{n}|H_i|=n+1$, it follows that there is a blue copy of $K_{m+1,n+1}$ between $H_2$ and $\bigcup_{i=4}^{n}H_i$ and therefore a blue copy of $S(n, m)$, a contradiction.

Thus, assume that $n$ is odd. From Claims \ref{Claim5} and \ref{Claim7}, $|H_1|+|H_2|+|H_3|\leq \frac{3n-1}{2}$. Recall that for each $i$ with $i=1,2,3$, we have $|H_i|\geq \frac{n-1}{2}$. Then either $|H_1|=|H_2|=|H_3|=\frac{n-1}{2}$ or $|H_1|=\frac{n+1}{2}$ and $|H_2|=|H_3|=\frac{n-1}{2}$.

First suppose $|H_1|=|H_2|=|H_3|=\frac{n-1}{2}$, then $\sum_{i=4}^{n}|H_i|=n+1$. To avoid a red copy of $S(n,m)$, the number of vertices in $\bigcup_{i=4}^{n}H_i$ with red edges to $H_2$ is at most $1$. On the other hand, to avoid a blue copy of $K_{m+1,n+1}$, the number of vertices in $\bigcup_{i=4}^{n}H_i$ with red edges to to $H_2$ is at least (and therefore exactly) $1$. Let $v$ be this vertex (note that $v$ is in a part of order $1$), say with $v \in H_{t}$, and let $u \in H_{2}$. To avoid a red copy of $S(n,m)$, all edges from $v$ to either $H_1$ or $H_3$ must be blue. Without loss of generality, we suppose that the edges from $v$ to $H_1$ are blue.

Since $v$ has only red or blue edges, $v$ has at least $n+m+1$ edges in either red or blue. If $v$ has at least $n + m + 1$ incident red edges, then let $u$ be an arbitrary vertex in $H_{2}$. Then the red edges from $u$ to $m$ vertices of $H_{1}$, the edge from $u$ to $v$, and $n$ remaining red edges incident to $v$ form a red copy of $S(n, m)$, a contradiction. Otherwise suppose $v$ has at least $n + m + 1$ incident blue edges and let $u$ be an arbitrary vertex in $H_{1}$. Then the blue edges from $u$ to $m$ vertices of $H_{3}$, the edge from $u$ to $v$, and $n$ remaining blue edges incident to $v$ form a blue copy of $S(n, m)$, a contradiction.

Thus, suppose $|H_1|=\frac{n+1}{2}$ and $|H_2|=|H_3|=\frac{n-1}{2}$. Then this means that $\sum_{i=4}^{t}|H_i|=n$. Suppose first that the larger set $H_{1}$ is in the middle of the red path of the reduced graph, that is, $P_3=H_2H_1H_3$.

\begin{claim}\label{Claim8}
All edges from $H_1$ to $\bigcup_{i=4}^{n}H_i$ are blue.
\end{claim}

\begin{proof}
Assume, to the contrary, that there exists a vertex $v$ with all red edges from $v$ to $H_{1}$. As before, $v$ must be in a part of order $1$. Also again the edges from $v$ to at least one of $H_2$ or $H_3$ are all blue. Without loss of generality, we suppose that the edges from $v$ to $H_2$ are blue. As above, $v$ has at least $n + m + 1$ edges in either red or blue. If these edges are red, then choosing an arbitrary vertex $u \in H_{1}$, the red edges from $u$ to $m$ vertices of $H_{2}$, the edge $uv$, and $n$ remaining red edges incident to $v$ form a red copy of $S(n, m)$. Otherwise if these edges are blue, then choosing an arbitrary vertex $u \in H_{2}$, the blue edges from $u$ to $m$ vertices of $H_{3}$, the edge $uv$, and $n$ remaining blue edges incident to $v$ form a blue copy of $S(n, m)$, a contradiction.
\end{proof}

From Claim~\ref{Claim8}, the edges from $\bigcup_{i=4}^{t}H_i$ to $H_1$ are blue. Let $A$ be the set of parts in $\bigcup_{i=4}^{t}H_i$ with red edges to $H_2$ and $B$ be the set of parts in $\bigcup_{i=4}^{t}H_i$ with blue edges to $H_2$. To avoid a red copy of $S(n,m)$ centered at a pair of vertices in $H_{2}$ and $H_{1}$, we must have $|A|\leq \frac{n-1}{2}$ since otherwise we could find a red copy of $S(n, m)$ using $m$ neighbors in $H_{3}$ of a vertex in $H_{1}$ and $n$ neighbors in $H_{1} \cup A$ of a vertex in $H_{2}$. Hence we have $|B|\geq \frac{n+1}{2}$. To avoid a blue copy of $S(n,m)$ (centered at vertices in $B$ and $H_{2}$), the edges from $B$ to $H_3$ must be red. Then $m$ neighbors in $H_{2}$ of a vertex in $H_{1}$ along with $n$ neighbors in $B \cup H_{1}$ of a vertex in $H_{3}$ form a red copy of $S(n,m)$, a contradiction.

Finally suppose the larger set is at one end of the red path in the reduced graph, that is, $P_3=H_1H_2H_3$. Recall that $|H_1|=\frac{n+1}{2}$, $|H_2|=|H_3|=\frac{n-1}{2}$, and $\sum_{i=4}^{t}|H_i|=n$. To avoid a red copy of $S(n,m)$ centered at vertices in $H_{1}$ and $H_{2}$, the edges from $\bigcup_{i=4}^{t}H_i$ to $H_2$ must be blue. Let $A$ be the set of parts in $\bigcup_{i=4}^{t}H_i$ with red edges to $H_3$ and $B$ be the set of parts in $\bigcup_{i=4}^{t}H_i$ with blue edges to $H_3$. To avoid a red copy of $S(n,m)$, we again have $|A|\leq \frac{n+1}{2}$ and to avoid a blue copy of $S(n,m)$, we have $|B|\leq \frac{n-1}{2}$, and hence $|A|=\frac{n+1}{2}$ and $|B|=\frac{n-1}{2}$. To avoid a blue copy of $S(n,m)$ centered at vertices in $H_{1}$ and $B$, the edges from $B$ to $H_1$ are red. This structire implies the following claim.

\begin{claim}\label{Claim9}
For each vertex $x\in B$, there are exactly one blue edge from $x$ to $A$.
\end{claim}

\begin{proof}
Assume, to the contrary, that there exists a vertex $x\in B$ such
that the edges from $x$ to $A$ are red or there are two blue edges
from $x$ to $A$. In the former case, we can find a red copy of $S(n,m)$ centered at vertices in $H_{1}$ and $B$, a
contradiction. In the latter case, we can find a blue copy of $S(n,m)$ centered at vertices in $H_{3}$ and $B$, a
contradiction.
\end{proof}

By Claim~\ref{Claim9}, we may choose a blue edge $uv$ such that $u\in A$ and $v\in B$. Since $v$ has exactly one blue edge to $A$, this means that $u$ is the only vertex in its part, say $u\in H_t$ with $H_t=\{u\}$. Furthermore, the edges from $u$ to either $H_1$ or $H_3$ must be red to avoid making a blue copy of $S(n, m)$. Then, as before, $u$ has at least $n+m+1$ incident edges in either red or blue. If the number of red
edges incident to $u$ is at least $n+m+1$, then we can find a red copy of $S(n, m)$ centered at $v$ and a vertex $v$ in $H_1$ or $H_3$ (with a red edge to $u$), a contradiction. If
the number of blue edges incident to $u$ is at least $n+m+1$, then we can find a blue copy of $S(n, m)$ centered at $v$ and a vertex $v$ in $H_2$, a
contradiction.

\begin{case}
$t \geq 4$ and $r=0$.
\end{case}

Since all parts are small, each has order at most $m$. Let $A$ be the set of parts with blue edges to $H_1$ and $B$ be the set of parts with red edges to $H_1$. Without loss of generality, suppose that $|A|\geq |B|$. Since $|A|+|B|\geq 2n+2m+2$, we assume that $|A|\geq n+m+1$. In order to avoid a blue copy of $S(n,m)$, there exists two parts, say $H_i$ and $H_j$ within $A$ such that the edges from $H_i$ to $H_j$ are red. Choose $u\in H_i$ and $v\in H_j$. Then there are at most $m$ blue edges incident to $u$ within $A \cup B$, and there are at most $m$ blue edges incident to $v$ within $A \cup B$. Furthermore, there are at most $m-1$ green edges incident to each of $u$ and $v$ (within the parts). This means that there are at least $2n$ red edges incident to $u$, and there are at least $2n$ red edges incident to $v$. These red edges allow us to create a red copy of $S(n,m)$, a contradiction.

\begin{case}
$t \geq 4$ and $r=1$.
\end{case}

Let $A$ be the set of parts with blue edges to $H_1$ and $B$ be the set of parts with red edges to $H_1$. If $|H_{1}|\geq n+1$, then to avoid a red or blue copy of $K_{m + 1, n + 1}$ (and therefore $S(n,m)$), we must have $|A|\leq m$ and $|B|\leq m$, and hence $|H_{1}|\geq 2n+m$. Note that $A$ and $B$ are both non-empty since otherwise there would be a $2$-partition, contradicting the minimality of $t$. In this case, we get the following easy claim.

\begin{claim}\label{Claim10}
For each vertex $x\in H_{1}$, the number of red (or respectively blue) edges incident to $x$ is at most $m$.
\end{claim}

\begin{proof}
Assume, to the contrary, that there exists a vertex $v\in H_{1}$ with at least $m + 1$ incident red edges. Choose an arbitrary vertex $u\in A$. Since $uv$ is red, it follows that the edge $uv$, $m$ of the other red edges incident to $v$, and $n$ of the other red edges from $u$ to $H_1$ form a red copy of $S(n,m)$, a contradiction.
\end{proof}

From Claim~\ref{Claim10}, for each vertex $x\in H_{1}$, the number of blue or red edges incident to $x$ is at most $m$. Then the number of green edges incident to $x$ is at least $2n-m \geq n + m + 1$. Then any green edge $xy$ within $H_1$ is the center of a green copy of $S(n,m)$, a contradiction.

Thus, we may assume that $|H_{1}|\leq n$. Choose another part from the $H_{2}$ and note that $|H_{2}|\leq m$. Let $A$ be the set of parts with blue edges to $H_{2}$ and $B$ be the set of parts with red edges to $H_{2}$. Without loss of generality, suppose that $|A|\geq |B|$. Since $|A|+|B|\geq 2n+2m+2$, we assume $|A|\geq n+m+1$.

For each vertex $x \in A$, we claim that $x$ has many incident red edges. Conversely, if $x$ has at least $m + 1$ incident blue edges, then $x$ along with an arbitrarily chosen vertex in $H_{2}$ form the centers of a blue copy of $S(n, m)$, a contradiction. Thus, each vertex $x$ must have at most $m$ incident blue edges. Since green edges can exist only within parts of the Gallai partition, each vertex $x \in (A \cap H_{i})$ has at least $\ell - (m + |H_{i}|)$ incident red edges.

To avoid a blue copy of $S(n,m)$ within $A$, there must exist (at least) two parts, say $H_i$ and $H_j$, such that the edges from $H_i$ to $H_j$ are red. Choose $u\in H_i$ and $v\in H_j$. Since $r = 1$, we have $|H_i|\leq m$ or $|H_j|\leq m$ and in either case, $|H_{i}|, |H_{j}| \leq n$. Then there are at least $(n+m+1)$ red edges incident to $u$ and to $v$. Then there is a red $S(n,m)$ centered at $u$ and $v$, a contradiction.

\begin{case}
$t \geq 4$ and $r=2$.
\end{case}

From the minimality of $t$, there exists a (small) part $H_j$ with $3\leq j\leq t$ such that the edges from $H_j$ to $H_1$ are red and the edges from $H_j$ to $H_2$ are blue since otherwise $H_{1}$ and $H_{2}$ could be merged into a single part of the partition. To avoid a monochromatic copy of $K_{m+1,n+1}$ (and therefore $S(n, m)$), $|H_1|\leq n$ and $|H_2|\leq n$. Let $A$ be the set of parts with blue edges to $H_{j}$ and $B$ be the set of parts with red edges to $H_{j}$. Without loss of generality, suppose that $|A|\geq |B|$. Since $|A|+|B|\geq 2n+2m+2$, we have $|A|\geq n+m+1$. Since $A$ satisfies the same properties as in the previous case, the proof is complete.
\end{proof}

Next, we give the lower bound of $gr_{k}(K_{3} : S(n,m))$ for $k\geq 4$.

\begin{lemma}\label{Lem:RamseyLowerk4}
Let $n,m$ be two integers with $n\geq m$. For $k\geq 4$, we have
$$
gr_{k}(K_{3} : S(n,m)) \geq \begin{cases}
5\cdot\frac{n}{2} + m(k-3)+1 & \text{ if $n$ is even,}\\
5\cdot\frac{n-1}{2} + m(k-3)+ 2 & \text{ if $n$ is odd.}
\end{cases}
$$
\end{lemma}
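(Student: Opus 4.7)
The plan is to argue by induction on $k$, with the $k=3$ construction of Lemma~\ref{Lemma:k3Lower} as the base case and at each stage adjoining $m$ new vertices through a fresh color. Suppose $G_{k-1}$ is a Gallai $(k-1)$-coloring of $K_{\ell_{k-1}}$ using palette $\{1,\dots,k-1\}$, containing no rainbow triangle and no monochromatic $S(n,m)$, where $\ell_{k-1}$ is the value claimed in the statement for $k-1$. I would construct $G_k$ on $\ell_{k-1}+m$ vertices by appending a set $T$ of $m$ new vertices, coloring every edge inside $T$ with the existing color $1$ (red) and every edge between $T$ and $V(G_{k-1})$ with a brand new color $c_k$. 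The order increases by exactly $m$, matching the target $\ell_k$ in both the even and odd cases.

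No rainbow triangle arises: a triangle with exactly one vertex in $T$ has two $c_k$-edges, a triangle with two vertices in $T$ and the third outside $T$ also has two $c_k$-edges, a triangle lying entirely inside $T$ is monochromatic red, and triangles inside $V(G_{k-1})$ inherit the property from the induction hypothesis.

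For the absence of a monochromatic $S(n,m)$, the $c_k$-subgraph is exactly the complete bipartite graph with parts $T$ and $V(G_{k-1})$; every $c_k$-edge has its $V(G_{k-1})$-endpoint of $c_k$-degree exactly $m$, which leaves at most $m-1$ further $c_k$-neighbors, insufficient to serve as the $m$-leaf center of $S(n,m)$ (recall $n\geq m$). The red subgraph becomes the disjoint union of the red subgraph of $G_{k-1}$ (no red $S(n,m)$ by induction) with a separate copy of $K_m$ on $T$ whose maximum degree $m-1<n+1$ is too small to serve as the $n$-leaf center. The remaining colors $2,3,\dots,k-1$ are unchanged from $G_{k-1}$ and are therefore monochromatic-$S(n,m)$-free by induction.

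The main subtlety is the choice of color on the edges inside $T$. Using the fresh color $c_k$ there would yield a $c_k$-colored $K_m$ fully joined to $V(G_{k-1})$, and one would easily find a $c_k$-monochromatic $S(n,m)$ with both centers in $T$. Reusing color $1$ avoids this trap, since the added red $K_m$ forms a component of order $m$ inside the red subgraph, disconnected from the rest, hence too small to host $S(n,m)$. Iterating this construction from $k=3$ gives a Gallai $k$-coloring on $\ell_k = \ell_3 + (k-3)m$ vertices with no monochromatic $S(n,m)$, which establishes the lower bound $gr_k(K_3:S(n,m)) \geq \ell_k + 1$.
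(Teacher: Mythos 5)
Your construction is exactly the paper's: start from the three-color blow-up of Lemma~\ref{Lemma:k3Lower} and, for each new color $i$, attach a clique $X_m$ of order $m$ colored with color $1$ inside and color $i$ on all edges to the existing graph, so that every color-$i$ edge has an endpoint of color-$i$ degree only $m$ and the new red $K_m$ is an isolated component. The proposal is correct and, if anything, spells out the rainbow-triangle and monochromatic-$S(n,m)$ checks more carefully than the paper does.
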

\begin{proof}
We prove this result by inductively constructing a coloring of
$K_{\ell}$ where
$$
\ell=\begin{cases}
5\cdot\frac{n}{2} + m(k-3) & \text{ if $n$ is even,}\\
5\cdot\frac{n-1}{2} + m(k-3)+ 1 & \text{ if $n$ is odd.}
\end{cases}
$$
which contains no rainbow triangle and no monochromatic copy of
$S(n,m)$. Let $G_3$ be the complete graph of order
$$
\begin{cases}
5\cdot\frac{n}{2} & \text{ if $n$ is even,}\\
5\cdot\frac{n-1}{2} + 1 & \text{ if $n$ is odd.}
\end{cases}
$$
with three colors $1,2,3$ provided by Lemma~\ref{Lemma:k3Lower}.

Let $X_m$ be a clique of order $m$ with color $1$. For each $i$ with $4\leq i\leq k$, we let $G_i$ be a clique obtained from $G_{i-1}$ by adding a copy of $X_m$ and adding the edges from $X_i$ to $G_{i-1}$ with color $i$.

This coloring clearly contains no rainbow triangle and, since there is no vertex with degree at least $n + 1$ adjacent to a vertex with degree at least $m + 1$ in the new color, there can be no monochromatic copy of $S(n,m)$, completing the construction.
\end{proof}

We now prove the exact value of $gr_{k}(K_{3} : S(n,m))$ for $k\geq 4$.

For this result, we recall a theorem of Gy{\'a}rf{\'a}s and Simonyi.

\begin{theorem}[\cite{GyarfasSimonyi}]\label{StarThm}
In every Gallai coloring of $K_{n}$, there is a vertex with degree at least $\frac{2n}{5}$ in one color.
\end{theorem}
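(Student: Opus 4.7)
My plan is to apply Theorem~\ref{Thm:G-Part} to obtain a Gallai partition of $V(K_n)$ into parts $H_1,\ldots,H_t$ whose reduced graph uses only two colors, say red and blue. For each index $i$ set
$$
R_i \;=\; \sum_{j \neq i,\ v_iv_j \text{ red}} |H_j| \quad\text{and}\quad B_i \;=\; \sum_{j \neq i,\ v_iv_j \text{ blue}} |H_j|,
$$
so that $R_i + B_i = n - |H_i|$. Every vertex of $H_i$ has red degree at least $R_i$ and blue degree at least $B_i$ in the Gallai coloring, so it suffices to exhibit some $i$ with $\max(R_i, B_i) \geq 2n/5$.

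The argument is then a short contradiction. Suppose $R_i, B_i < 2n/5$ for every $i$; then $|H_i| = n - (R_i + B_i) > n/5$ for every $i$, which forces $t \leq 4$. I would dispatch the remaining cases $t \in \{2,3,4\}$ directly. For $t=2$ all cross edges are monochromatic and the larger part has order $\geq n/2 \geq 2n/5$. For $t=3$ the $2$-colored triangle must contain two same-colored edges, which necessarily share a vertex $v_i$; then $R_i$ or $B_i$ equals $n - |H_i|$, and this exceeds $2n/5$ since the other two parts each have size $> n/5$. For $t=4$ the majority color in the reduced $K_4$ has at least three edges, and because a matching in $K_4$ has only two edges, this color must contain a vertex of reduced-degree $\geq 2$, whose weighted neighborhood totals more than $2 \cdot n/5 = 2n/5$. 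Each case contradicts the standing assumption.

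The main thing to be careful with is the small-case bookkeeping at $t=4$: one must verify that every spanning subgraph of $K_4$ with at least three edges contains a vertex of degree $\geq 2$, and that the resulting weighted degree strictly exceeds $2n/5$ under the strict lower bound $|H_i| > n/5$. The tight example---a balanced blow-up of $C_5$ with all parts of order $n/5$---is precisely excluded by this strict inequality, which is why the argument never needs to recurse into a part. If a boundary version with $|H_i| \geq n/5$ were needed, I would fall back on induction on $n$, applying the induction hypothesis inside a largest part and combining its internal high-monochromatic-degree vertex with the external edges guaranteed by the reduced graph.
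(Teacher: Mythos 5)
Your argument is correct and complete. Note that the paper does not prove this statement at all --- it is imported verbatim from Gy\'arf\'as and Simonyi \cite{GyarfasSimonyi} as a black box --- so there is no in-paper proof to compare against; your write-up is essentially the standard derivation from Theorem~\ref{Thm:G-Part}. The key observation, that either some part $H_i$ satisfies $R_i+B_i=n-|H_i|\geq \frac{4n}{5}$ (whence $\max(R_i,B_i)\geq \frac{2n}{5}$ and any vertex of $H_i$ witnesses the bound), or else every part has order strictly greater than $\frac{n}{5}$ and hence $t\leq 4$, is exactly the right dichotomy, and your three small cases are each verified correctly: for $t=3$ two like-colored edges of the reduced triangle must share a vertex, and for $t=4$ a color class with at least three of the six edges cannot be a matching, so some reduced vertex has two like-colored reduced neighbors whose parts together exceed $\frac{2n}{5}$. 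The closing remark about needing induction for a boundary version is unnecessary --- the strict-inequality contradiction already closes the argument, and the balanced $C_5$ blow-up meets the bound with equality rather than violating it. The only caveat worth recording is the degenerate case $n=1$, where $K_1$ admits no nontrivial Gallai partition and the statement is vacuously interpreted; for $n\geq 2$ your proof stands as written.
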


\begin{proposition}\label{Pro:RamseyUpperk4}
For $k\geq 4$, if $n \geq 6m + 5$, then we have
$$
gr_{k}(K_{3} : S(n,m))=\begin{cases}
5\cdot\frac{n}{2} + m(k-3)+1 & \text{ if $n$ is even,}\\
5\cdot\frac{n-1}{2} + m(k-3)+ 2 & \text{ if $n$ is odd.}
\end{cases}
$$
\end{proposition}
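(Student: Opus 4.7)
My plan is to prove the upper bound by induction on $k$, since the lower bound is supplied by Lemma~\ref{Lem:RamseyLowerk4} and the base case $k = 3$ by Proposition~\ref{Thm:RamseyS(n,m)k3}. Writing $\ell_k$ for the claimed value, the arithmetic identity $\ell_k - \ell_{k-1} = m$ is the skeleton of the induction: in any Gallai $k$-coloring of $K_{\ell_k}$ with no monochromatic $S(n,m)$ I will locate a color $c$ together with a set $X \subseteq V(G)$ of at most $m$ vertices such that every edge of color $c$ is incident to $X$. Deleting $X$ produces a Gallai coloring on $\ell_{k-1}$ vertices using at most $k - 1$ colors, and the inductive hypothesis then supplies a monochromatic $S(n,m)$ in $G - X \subseteq G$, a contradiction.

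To build the pair $(c, X)$, I would first apply the Gallai partition of Theorem~\ref{Thm:G-Part} to $G$, choosing one with the fewest parts $t$, and let red and blue denote the two colors appearing between parts; the remaining $k - 2$ colors appear only inside parts. The hypothesis $n \geq 6m + 5$ together with the size of $\ell_k$ is precisely what is needed to replay Claim~\ref{Claim1} verbatim, giving the dichotomy $|H_i| \geq \lfloor n/2 \rfloor$ or $|H_i| \leq m$ for each part. The proof then splits along the same five cases as Proposition~\ref{Thm:RamseyS(n,m)k3}: when the reduced graph has several large parts, the arguments of Claims~\ref{Claim4}--\ref{Claim9} apply with only cosmetic changes to force a monochromatic $S(n, m)$ in red or blue (the extra inner colors are irrelevant between parts); when an inner color $c$ is confined to a single small part $H_j$ with $|H_j| \leq m$, I simply take $X = H_j$.

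The delicate case is $t = 2$, where the smaller part $H_2$ has $|H_2| \leq m$ but $H_1$ may still exhibit all $k$ colors internally. Here I plan to use the analog of Claim~\ref{Claim2} to show that every vertex of $H_1$ has at most $m$ blue neighbors within $H_1$, and then to invoke Theorem~\ref{StarThm} on $H_1$ to obtain a vertex $v$ whose degree in some non-blue color $c^{\ast}$ exceeds $2|H_1|/5$. Since $n \geq 6m + 5$, this degree is large enough to mimic the role played by green in Claims~\ref{Claim3}--\ref{Claim7}, and the resulting analysis either extracts a monochromatic $S(n, m)$ directly, or shows that $c^{\ast}$ is concentrated on at most $m$ vertices, delivering the desired $X$.

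The step I expect to be the main obstacle is the accounting in this $t = 2$ case: one must confirm that the deleted set has cardinality at most $m$, not $2m$, which is exactly the tight slack built into the constant $m(k-3)$ in $\ell_k$. The other cases ($t \geq 4$ with various values of $r$) should inherit their analysis directly from Proposition~\ref{Thm:RamseyS(n,m)k3}, since the extra $k - 3$ colors live only inside small parts and each therefore occupies at most $m$ vertices, precisely the budget required for the reductive step.
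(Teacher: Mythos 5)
Your overall strategy---induction on $k$, deleting a set $X$ of at most $m$ vertices to kill one color and drop to $\ell_{k-1}$---is genuinely different from what the paper does, and it has a gap at its central step: the pair $(c,X)$ you need may not exist. A color $c$ other than the two Gallai-partition colors is \emph{not} confined to a single part; it may appear inside several parts simultaneously, and in particular inside a large part of order about $n/2$. For instance, green could form a perfect matching inside $H_{1}$: this creates no monochromatic $S(n,m)$, yet no set of $m$ vertices meets every green edge, so no admissible $X$ exists and the induction cannot advance. Your closing assertion that ``the extra $k-3$ colors live only inside small parts and each therefore occupies at most $m$ vertices'' is false on both counts (there are $k-2$ such colors, and they may occupy large parts or many parts at once); the same unproved dichotomy (``either a monochromatic $S(n,m)$ or $c^{\ast}$ is concentrated on at most $m$ vertices'') reappears in your $t=2$ analysis. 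The budget $m$ per color in the formula does not come from covering each color by $m$ vertices.

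The paper avoids induction on $k$ entirely. It takes a maximal set $T$ of vertices each joined monochromatically to $G\setminus T$, observes $|T_{i}|\leq m$ for each color class of $T$ and (Claim~\ref{Claim12}) that at most $k-2$ colors occur on $T$--$G'$ edges, so $|G'|\geq \frac{5n}{2}+1-m$. Colors that are ``spread thin'' inside $G'$ are then handled not by deletion but by degree counting: Fact~\ref{Fact:m-1} bounds each vertex's degree in every color represented in $T$ by $m$, so every vertex of $G'$ has at least $n+m+1$ incident edges in red or in blue, and a short argument (Claim~\ref{Claim13} and the pigeonhole on a red/blue neighbourhood) produces a red or blue $S(n,m)$; when $|G'|\geq \frac{5n}{2}+1$ the three-colour case analysis is invoked verbatim. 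If you want to keep an inductive framework, the right invariant is not ``a color can be deleted'' but ``a color has small maximum degree'' (the admissible/inadmissible bookkeeping used elsewhere in this paper for $B_{3}$); as written, your reduction step fails.
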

\begin{proof}
Let $G$ be a coloring of $K_{\ell}$ where
$$
\ell=\begin{cases}
5\cdot\frac{n}{2} + m(k-3)+1 & \text{ if $n$ is even,}\\
5\cdot\frac{n-1}{2} + m(k-3)+ 2 & \text{ if $n$ is odd}
\end{cases}
$$
containing no rainbow triangle and no monochromatic copy of $S(n, m)$.

Let $T$ be a largest set of vertices in $V(G)$ with the properties that:
\bi
\item each vertex in $T$ has all one color on its edges to $G \setminus T$, and
\item $|G \setminus T| \geq n + 1$.
\ei
For each color $i$ with $1 \leq i \leq k$, let $T_{i}$ be the set of vertices in $T$ with color $i$ on their edges to $G \setminus T$. Then in order to avoid a monochromatic copy of $K_{m + 1, n + 1}$ (and therefore a monochromatic copy of $S(n, m)$), we immediately see that $|T_{i}| \leq m$ and so $|T| \leq km$.

Let $G' = G \setminus T$. Since $|G'| \geq |G| - km \geq \frac{5n}{2} - 3m \geq n + m + 1$, no vertex in $G'$ can have at least $m$ incident edges within $G'$ in a color $i$ with $T_{i} \neq \emptyset$ to avoid a monochromatic copy of $S(n, m)$ in color $i$. More generally, we have the following fact.

\begin{fact}\label{Fact:m-1}
Each vertex of $G'$ has a total of at most $m$ incident edges in any color $i$ where $T_{i} \neq \emptyset$.
\end{fact}

By Theorem~\ref{Thm:G-Part}, there is a Gallai partition of $V(G')$, say with red and blue available for edges between the parts. By the maximality of $|T|$, there can be no single part $H_{i}$ of this Gallai partition such that $G' \setminus H_{i}$ has red edges to at most $m$ vertices of $H_{i}$ and blue edges to at most $m$ vertices of $H_{i}$.

\begin{claim}\label{Claim12}
The total number of subsets $T_{i}$ of $T$ is at most $k - 2$.
\end{claim}

\begin{proof}
Suppose, to the contrary, that there are at least $k - 1$ colors appearing on edges from $T$ to $G'$.

First suppose that there are $k$ colors appearing on edges from $T$ to $G'$. Since $|G'| \geq n + m + 1$, by Theorem~\ref{StarThm}, there is a vertex with degree at least $\frac{2(n + m + 1)}{5} \geq m$, a contradiction.

Next, suppose there are exactly $k - 1$ colors appearing on edges from $T$ to $G'$ and let red be the color not appearing. This means that $|G'| \geq 5\cdot \frac{n}{2} - 2m \geq 2n + 1$. Using Theorem~\ref{StarThm}, there is a monochromatic star but it may (and in fact must) be red. If the Gallai partition of $G'$ has only two parts, the edges between the parts must then be red. Since $|G'| \geq 2n + 1$, one of these parts has order at least $n + 1$, meaning that the other part must have order at most $m$ to avoid making a monochromatic copy of $K_{m + 1, n + 1}$. We can then move the smaller part to $T$, contradicting the maximality of $|T|$, a contradiction. Thus, we may assume that the Gallai partition of $G'$ has at least $4$ parts and two colors appearing on the edges in between the parts. More specifically, since the partition uses the smallest number of parts, both colors must appear on edges incident to vertices in each part. Perhaps one of these colors is red but let blue be one of these colors that is not red. Since blue appears on edges between $T$ and $G'$, no vertex in $G'$ can have more than $m$ incident blue edges, meaning that all parts of the partition of $G'$ have order at most $m - 1$. More generally, every vertex in $G'$ has at most $m$ incident blue edges, at most $m - 2$ incident edges of colors other than blue or red (inside the parts of the Gallai partition), and so at least $|G'| - 2m \geq n + m + 2$ incident red edges. Any choice of two vertices in $G'$ with a red edge between them forms the centers of a red copy of $S(n, m)$, a contradiction to complete the proof of Claim~\ref{Claim12}.
\end{proof}

If $|G'| \geq \frac{5n}{2} + 1$, then since the proof of Proposition~\ref{Thm:RamseyS(n,m)k3} does not use edges inside the parts, we may apply the same arguments to obtain a monochromatic copy of $S(n, m)$. Thus, 
suppose $\frac{5n}{2} - m \leq |G'| \leq \frac{5n}{2}$ in particular, this means that there are $k - 2$ subsets $T_{i}$ of $T$ and red and blue, the two colors appearing between parts of the Gallai partition of $G'$, are the two colors not represented in $T$.


By Fact~\ref{Fact:m-1}, for any vertex $x\in V(G')$, the number of edges incident to $x$ with color other than red or blue is at most $m-1$. Let $v \in V(G')$ and suppose, without loss of generality, that $v$ has at least as many incident red edges as blue. Since $n\geq 6m+5$, $v$ has at least $n+m+1$ incident red edges, say with corresponding vertices $v_{1},v_{2},\dots,v_{n+m+1}$.

\begin{claim}\label{Claim13}
For any $v_{i}$ with $1\leq i\leq n+m+1$, the total number of red edges incident to $v_{i}$ is at most $m$.
\end{claim}

\begin{proof}
Assume, to the contrary, that there exists a vertex $v_j$ with at least $m + 1$ incident red edges. Then these edges along with the edge $vv_{j}$ and $n$ other (disjoint) red edges from $v$ form a red copy of $S(n, m)$, a contradiction. 
\end{proof}

From Claim~\ref{Claim13}, for any $v_{i}$ with $1\leq i\leq n+m+1$, the number of blue edges incident to $v_{i}$ in $G'$ is at least $2n+3$. Choose two such vertices, say $v_i,v_{j}$, such that the edge $v_iv_{j}$ is blue. Then there is a blue copy of $S(n,m)$ centered at $v_{i}$ and $v_{j}$, a contradiction to complete the proof.
\end{proof}

\section{Proof of Theorem~\ref{Thm:Ramseynm}}\label{Sec3}

For general $n\geq m\geq 1$, we have the following result.
\begin{proposition}\label{Prop:RamseynmLow}
Let $n,m,k$ be three integers with $n\geq m\geq 1$, $k\geq 3$. Then
$$
gr_{k}(K_{3} : S(n,m))\geq \begin{cases}
\max\{5\cdot\frac{n}{2},n+2m+1\} + m(k-3)+1 & \text{ if $n$ is even,}\\
\max\{5\cdot\frac{n-1}{2},n+2m+1\}+ m(k-3)+ 2 & \text{ if $n$ is
odd.}
\end{cases}
$$
\end{proposition}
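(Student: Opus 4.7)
The plan is to prove the two terms inside the maximum separately, since the bound $gr_k(K_3:S(n,m))\geq \max\{A,B\}$ follows from $gr_k(K_3:S(n,m))\geq A$ together with $gr_k(K_3:S(n,m))\geq B$. The first term already follows from Lemma~\ref{Lemma:k3Lower} when $k=3$ and from Lemma~\ref{Lem:RamseyLowerk4} when $k\geq 4$; since neither of those constructions uses the hypothesis $n\geq 6m+7$, they produce the claimed bound for all $n\geq m\geq 1$ and all $k\geq 3$.

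For the second term, $n+2m+1$, I would exhibit an explicit Gallai $k$-coloring with no monochromatic $S(n,m)$ on $(n+2m+1)+m(k-3)$ vertices when $n$ is even and on $(n+2m+2)+m(k-3)$ vertices when $n$ is odd. The three-color base is the Gallai partition with parts $A$ of size $n+m+1$ and $B$ of size $m$, together with a singleton part $\{v\}$ when $n$ is odd, where internal edges of $A$ receive color $2$, internal edges of $B$ receive color $3$, edges between $A$ and $B$ receive color $1$, and (for $n$ odd) all edges incident to $v$ receive color $3$. To extend to $k\geq 4$ colors, I would iterate the technique from Lemma~\ref{Lem:RamseyLowerk4}: for $i=4,\dots,k$, append a fresh clique $X_i$ of $m$ vertices with internal edges in color $2$ and all edges from $X_i$ to the previously constructed graph in color $i$.

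Verification reduces to two routine checks. First, a short case analysis over the placement of three vertices in the partition shows that every triangle uses at most two colors, so the coloring is Gallai. Second, in each color $c$, no edge $uv$ simultaneously has $\deg_c(u)\geq n+1$ and $\deg_c(v)\geq m+1$: in color $1$ every $A$-vertex has color-$1$ degree exactly $m$; in color $2$ the largest monochromatic subgraph is $K_{n+m+1}$, which is one vertex short of containing $S(n,m)$; in color $3$, for $n$ even the whole monochromatic graph is $K_m$, while for $n$ odd the only high-degree vertex is $v$, whose color-$3$ neighbors in $A$ and $B$ have color-$3$ degree only $1$ and $m$ respectively; and for each $i\geq 4$, every non-$X_i$ vertex has color-$i$ degree exactly $m$.

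The main obstacle I expect is the odd-$n$ case, where the natural two-color Ramsey-type construction only achieves $n+2m+1$ vertices while the stated bound demands $n+2m+2$. The resolution is precisely the singleton part $\{v\}$ with color-$3$ edges to both $A$ and $B$: its Gallai partition $\{A,B,\{v\}\}$ still uses only two colors between its parts (namely $1$ and $3$), so no rainbow triangle is introduced, and the small color-$3$ degrees at $A$- and $B$-vertices prevent any color-$3$ copy of $S(n,m)$ centered at $v$ despite $v$'s large color-$3$ degree.
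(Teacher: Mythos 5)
Your proposal is correct, and its core mechanism -- build a base coloring with no monochromatic $S(n,m)$, then repeatedly append an $m$-clique joined to everything in a fresh color -- is the same as the paper's. The difference is in the base: the paper takes $F_2$ to be a $2$-coloring of $K_{n+2m+1}$ coming from the classical Ramsey lower bound $R(S(n,m),S(n,m))\geq n+2m+2$ (via Theorem~\ref{Lem:RamseyS(n,m)}) and starts appending cliques already at $i=3$, whereas you construct an explicit $3$-colored Gallai base ($A$ of order $n+m+1$, $B$ of order $m$, and for odd $n$ an extra apex $v$ in a third color) and start appending at $i=4$. Your version buys two things. First, it is self-contained and avoids leaning on Theorem~\ref{Lem:RamseyS(n,m)}, which is stated as an equality only under side conditions on $n$ and $m$, so quoting it for the unconditional lower bound requires at least a remark that the extremal construction is unconditional. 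Second, your apex vertex $v$ cleanly accounts for the extra ``$+1$'' in the odd-$n$ case, which the paper's write-up asserts but does not construct (its base $F_2$ has the same order for both parities); the paper's construction only covers the odd case because appending $m\geq 1$ extra vertices per color provides slack, a point it never makes explicit. One caveat worth flagging: the paper's iteration actually produces $n+2m+1+m(k-2)=n+3m+1+m(k-3)$ vertices, matching the $n+3m+1$ term claimed in Theorem~\ref{Thm:Ramseynm} rather than the $n+2m+1$ term in the proposition as stated; your construction proves exactly the proposition as written, so if the intended bound is the stronger $n+3m+1$ one, you would need to fold one more $m$-clique (in a reused color, e.g.\ color $3$ joined by color $1$ as the paper does) into your base.
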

\begin{proof}
From Theorem \ref{Lem:RamseyS(n,m)}, we have $R(S(n,m),S(n,m))\geq
n+2m+2$. Let $F_2$ be a complete graph of order $n+2m+1$ containing
neither copy of $S(n,m)$ with color $1$ nor copy of $S(n,m)$ with
color $2$.

We prove this result by inductively constructing a coloring of
$K_{\ell}$ where
$$
\ell=\begin{cases}
n+2m+1 + m(k-3) & \text{ if $n$ is even,}\\
n+2m+1+ m(k-3)+ 1 & \text{ if $n$ is odd.}
\end{cases}
$$
which contains no rainbow triangle and no monochromatic copy of
$S(n,m)$.

Let $X$ be the clique of order $m$ colored with color $1$. For each $i$ with $3\leq i\leq k$, let $F_i$ be a clique obtained from $|F_{i-1}|$ by adding a copy of $X$ and adding the edges from $X$ to $F_{i-1}$ with color $i$. This coloring clearly contains no rainbow triangle and no monochromatic copy of $S(n,m)$, completing the construction.
\end{proof}

\begin{proposition}\label{Prop:RamseynmUp}
Let $n,m,k$ be three integers with $n\geq m\geq 1$, $k\geq 3$. If
$n\leq 6m+6$, then
$$
gr_{k}(K_{3} : S(n, m))\leq \begin{cases}
2n+m(k+3)+8 & \text{ if $n$ is even,}\\
2n+m(k+3)+9 & \text{ if $n$ is odd.}
\end{cases}
$$
\end{proposition}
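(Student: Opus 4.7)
The plan is to follow the template of Proposition~\ref{Pro:RamseyUpperk4}, adapted to the short-star regime $n\leq 6m+6$ where the constant $2n+6m+7$ in the bound of Theorem~\ref{Thm:Ramseynm} dominates rather than $\tfrac{5n}{2}$. Assume for contradiction that $G$ is a $k$-coloring of $K_\ell$, where $\ell$ equals the claimed upper bound, containing no rainbow triangle and no monochromatic $S(n,m)$.

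First I would construct the peeling set exactly as in Proposition~\ref{Pro:RamseyUpperk4}: let $T\subseteq V(G)$ be maximal subject to each vertex of $T$ sending a monochromatic set of edges to $G\setminus T$ and $|G\setminus T|\geq n+1$, and partition $T=\bigcup_{i=1}^{k} T_i$ according to the outward color. Forbidding a monochromatic $K_{m+1,n+1}\supseteq S(n,m)$ forces $|T_i|\leq m$, and hence $|T|\leq km$. Setting $G'=G\setminus T$ then gives $|G'|\geq \ell-km$, which evaluates to $2n+3m+8$ (even $n$) or $2n+3m+9$ (odd $n$); in particular $|G'|\geq 2n+2m+2$, and Fact~\ref{Fact:m-1} applies inside $G'$.

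Next apply Theorem~\ref{Thm:G-Part} to $G'$ with the minimum number of parts, designating red and blue as the two crossing colors. Claim~\ref{Claim12} transfers verbatim: at most $k-2$ colors appear on $T$-to-$G'$ edges, and we may arrange for red and blue to be absent from $T$, since otherwise Theorem~\ref{StarThm} together with Fact~\ref{Fact:m-1} already produces a monochromatic $S(n,m)$ inside $G'$. The remaining task is to find a red or blue $S(n,m)$ inside $G'$, and I would run the case analysis of Proposition~\ref{Thm:RamseyS(n,m)k3} on $G'$ by the number of parts $t$ and the number of ``large'' parts $r$, but with the threshold in Claim~\ref{Claim1} relaxed to ``$|H_i|\leq m$ or $|H_i|\geq n+1$''. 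The $K_{m+1,n+1}$ blow-up arguments of Cases~2--5 then go through whenever a union of parts has size at least $n+1$ facing a part of size at least $m+1$, while the vertex-counting arguments of Case~1 work because $|G'|\geq 2(n+m+1)$.

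The main obstacle will be this adapted base case: the hypothesis $n\geq 6m+7$ of Proposition~\ref{Thm:RamseyS(n,m)k3} was used arithmetically in several subcases (for instance in Claim~\ref{Claim4}, where $\tfrac{5(n-1)}{2}-\tfrac{3n+3}{2}\geq m+1$ is invoked, and in Claim~\ref{Claim1} to rule out intermediate part sizes $m+1\leq |H_j|\leq \lfloor n/2\rfloor-1$). In the present regime those inequalities fail, so each subcase must be re-examined under the weaker threshold ``$|H_i|\leq m$ or $|H_i|\geq n+1$'', using the slack $|G'|-2n\geq 3m+8$ to drive the pigeonhole counts. Once the base case is settled, the peeling construction closes the argument automatically.
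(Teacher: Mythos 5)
Your overall architecture matches the paper's: peel off a maximal set $T$ of vertices with monochromatic attachment to the rest, bound $|T_{i}|\leq m$ via the forbidden $K_{m+1,n+1}$, and reduce to finding a red or blue copy of $S(n,m)$ in a Gallai-partitioned $G'$ of order at least $2n+3m+8$, with the large-part dichotomy relaxed to ``$|H_i|\leq m$ or $|H_i|\geq n+1$'' (this is exactly the paper's Claim~\ref{Claim16}). The paper handles $k=3$ directly on $G$ without peeling and only introduces $T$ for $k\geq 4$, but that is a cosmetic difference.

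The gap is that you defer precisely the part that constitutes the proof. You assert that the $K_{m+1,n+1}$ blow-up arguments of Cases 2--5 of Proposition~\ref{Thm:RamseyS(n,m)k3} ``go through'' and that the base case merely ``must be re-examined,'' but under the new threshold the case structure changes qualitatively, not just arithmetically. The decisive observation, which you do not make, is that two parts of order at least $n+1$ already span a monochromatic $K_{m+1,n+1}$, so the number $r$ of large parts satisfies $r\leq 1$; consequently the entire $r\geq 2$ analysis (Claims~\ref{Claim4}--\ref{Claim9}, whose hypotheses $|H_i|\geq\frac{n-1}{2}$ together with $|H_1|+|H_3|\leq n$ are incompatible with $|H_i|\geq n+1$) does not ``go through'' --- it becomes vacuous --- and the substance of the proof is fresh degree counting in the cases $t\leq 3$, $r=0$, and $r=1$, where the slack $|G'|-2n\geq 3m+8$ must replace the slack previously supplied by $n\geq 6m+7$. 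That counting is what the paper actually carries out, and for $k\geq 4$ it does not rerun the partition cases at all: after its analogue of Claim~\ref{Claim12}, it finishes with a direct two-color argument (Fact~\ref{Fact13}) --- a vertex $v$ with at least $n+2m+4$ red neighbors forces each such neighbor to have at least $2n+4m+8$ incident blue edges, and any blue edge among those neighbors centers a blue $S(n,m)$. As written, your proposal is a plausible outline whose critical steps remain unverified.
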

\begin{proof}
Let $G$ be a coloring of $K_\ell$ where
$$
\ell=\begin{cases}
2n+m(k+3)+8 & \text{ if $n$ is even,}\\
2n+m(k+3)+9 & \text{ if $n$ is odd.}
\end{cases}
$$

First suppose that $k=3$ so $G$ is a coloring of $K_\ell$ where
$$
\ell=\begin{cases}
2n+6m+8 & \text{ if $n$ is even,}\\
2n+6m+9& \text{ if $n$ is odd}
\end{cases}
$$
with no rainbow triangle and suppose, for a contradiction, that $G$ contains no monochromatic copy of $S(n, m)$. Since $G$ contains no rainbow triangle, it follows from Theorem~\ref{Thm:G-Part} that there is a Gallai partition of $V(G)$. Suppose red and blue are the two colors appearing in this partition and let green be the third available color. Let $H_1,H_2,\ldots,H_t$ be the parts in this partition and choose such a partition so that $t$ is minimized.

\begin{claim}\label{Claim16}
For each part $H_i$ of $K_{\ell}$, $|H_i|\leq m$ or $|H_i|\geq n+1$.
\end{claim}
\begin{proof}
Suppose that there exists one part, say $H_j$, and $m+1\leq |H_j|\leq n$. Let $A$ be the set of parts with red edges to $H_j$, and $B$ be the set of parts with blue edges to $H_j$. Without loss of generality, suppose $|A|\geq |B|$. Since $|A|+|B|\geq (2n + 6m + 8) - n \geq 2n+1$, we may assume that $|A|\geq n+1$. Then the red edges between $A$ and $H$ yield a red copy of $K_{n+1,m+1}$, which contains a red copy of $S(n. m)$, a contradiction.
\end{proof}

If $2\leq t\leq 3$, then by the minimality of $t$, we may assume $t=2$, say with corresponding parts $H_1$ and $H_2$. Suppose the edges between $H_1$ and $H_2$ are red. Without loss of generality, we assume $|H_1|\leq |H_2|$. If $|H_1|\geq n+1$, then there is a red monochromatic copy of $S(n,m)$, a contradiction. From Claim~\ref{Claim16}, we have $|H_1|\leq m$. To avoid a monochromatic copy of $S(n,m)$, for each $x\in H_2$, the total number of red edges incident to $x$ is at most $m$. Then the number of blue and green edges incident to $x$ is at least $2n+5m+7$. Without loss of generality, we assume that the number of blue edges incident to $x$ is at least $\frac{2n+5m+7}{2}\geq n+2m+1$. Let $x_1,x_2,\ldots,x_{n+2m+1}$ be neighbors of $x$ by these blue edges. In order to avoid creating a blue copy of $S(n,m)$, for each $x_i$, the total number of blue edges incident to $x_i$ is at most $m$, and hence the number of green edges incident to $x_i$ is at least $2n+4m+7$. We can then easily find a green copy of $S(n,m)$ centered at two of these vertices, a contradiction.

Thus, we may assume that $t\geq 4$. Let $r$ be the number of parts of the Gallai partition so $|H_i|\geq n+1$ for each $i$ with $1\leq i\leq r$. Call such parts large and all remaining parts (of order at most $n$, and therefore at most $m$ by Claim~\ref{Claim16}) small. In order to avoid a monochromatic copy of $S(n,m)$, we immediately have $r\leq 1$.

\setcounter{case}{0}
\begin{case}
$r=0$.
\end{case}

By Claim~\ref{Claim16}, every part has order at most $m$. Let $A$ be the set of parts with blue edges to $H_1$ and $B$ be the set of parts with red edges to $H_1$. Without loss of generality, suppose that $|A|\geq |B|$. Since $|A|+|B|\geq 2n+5m+8$, we may assume that $|A|\geq n+2m+4$. In order to avoid a blue copy of $S(n,m)$, there exist two parts within $A$, say $H_i,H_j$, such that the edges from $H_i$ to $H_j$ are red. Choose $u\in H_i$ and $v\in H_j$.
There are at most $m$ blue edges incident to $u$ and at most $m$ blue edges incident to $v$ since otherwise one of these vertices along with a vertex of $H_{1}$ would form the centers of a blue copy of $S(n, m)$. Furthermore, there are at most $m-1$ green edges incident to $u$ and $v$, and so there are at least $2n+4m+7$ red edges incident to $u$ and at least $2n+4m+7$ red edges incident to $v$. This means there is a red copy of $S(n,m)$ centered at $u$ and $v$, a contradiction.

\begin{case}
$r=1$.
\end{case}

Let $H_{1}$ be the (unique) large part and let $A$ be the set of parts with blue edges to $H_1$ and $B$ be the set of parts with red edges to $H_1$. Since $|H_{1}|\geq n+1$, in order to avoid a red or blue copy of $S(n,m)$, we have $|A|\leq m$ and $|B|\leq m$, and hence $|H_{1}|\geq 2n+4m+8$. Since $A$ and $B$ are both nonempty (since $t \geq 4$), the following fact is immediate.

\begin{fact}\label{Fact:17}
For each vertex $x\in H_{1}$, the total number of blue or red edges incident to $x$ is at most $m$.
\end{fact}

From Fact~\ref{Fact:17}, for each vertex $x\in H_{1}$, the number of blue or red edges incident to $x$ is at most $m$. Thus, the number of green edges incident to $x$ is at least $2n+4m+7$. Choosing any green edge $xy$ within $H_{1}$, this edge forms the center of a green copy of $S(n,m)$, a contradiction.

Now we may assume that $k\geq 4$.

As in the proof of Proposition~\ref{Pro:RamseyUpperk4}, let $T$ be a largest set of vertices in $V(G)$ with the properties that:
\bi
\item each vertex in $T$ has all one color on its edges to $G \setminus T$, and
\item $|G \setminus T| \geq n + 1$.
\ei
For each color $i$ with $1 \leq i \leq k$, let $T_{i}$ be the set of vertices in $T$ with color $i$ on their edges to $G \setminus T$. Then in order to avoid a monochromatic copy of $K_{m + 1, n + 1}$ (and therefore a monochromatic copy of $S(n, m)$), we immediately see that $|T_{i}| \leq m$ and so $|T| \leq km$.

Let $G' = G \setminus T$. Since 
$$
|G'| \geq \ell - km \geq [2n + m(k + 3) + 8] - km \geq 2n + 3m + 8,
$$
no vertex in $G'$ can have at least $m$ incident edges within $G'$ in any color $i$ with $T_{i} \neq \emptyset$ to avoid a monochromatic copy of $S(n, m)$ in color $i$. More generally, we have the following fact.

\begin{fact}\label{Fact:m-1gen}
Each vertex of $G'$ has a total of at most $m$ incident edges in any color $i$ where $T_{i} \neq \emptyset$.
\end{fact}

By Theorem~\ref{Thm:G-Part}, there is a Gallai partition of $V(G')$, say with red and blue available for edges between the parts. By the maximality of $|T|$, there can be no single part $H_{i}$ of this Gallai partition such that $G' \setminus H_{i}$ has red edges to at most $m$ vertices of $H_{i}$ and blue edges to at most $m$ vertices of $H_{i}$. This leads to the following analogue of Claim~\ref{Claim12}

\begin{claim}\label{Claim12gen}
The total number of subsets $T_{i}$ of $T$ is at most $k - 2$.
\end{claim}

\begin{proof}
Suppose, to the contrary, that there are at least $k - 1$ colors appearing on edges from $T$ to $G'$.

First suppose that there are $k$ colors appearing on edges from $T$ to $G'$. Since $|G'| \geq 2n + 3m + 8$, by Theorem~\ref{StarThm}, there is a vertex with degree at least $\frac{2(2n + 3m + 8)}{5} \geq m$, a contradiction to Fact~\ref{Fact:m-1gen}.

Next, suppose there are exactly $k - 1$ colors appearing on edges from $T$ to $G'$ and let red be the color not appearing. This means that $|G'| \geq 2n + 4m + 8 \geq 2n + 1$. Using Theorem~\ref{StarThm}, there is a monochromatic star but it may (and in fact must) be red. If the Gallai partition of $G'$ has only two parts, the edges between the parts must then be red. Since $|G'| \geq 2n + 1$, one of these parts has order at least $n + 1$, meaning that the other part must have order at most $m$ to avoid making a monochromatic copy of $K_{m + 1, n + 1}$. We can then move the smaller part to $T$, contradicting the maximality of $|T|$, a contradiction. Thus, we may assume that the Gallai partition of $G'$ has at least $4$ parts and two colors appearing on the edges in between the parts. More specifically, since the partition uses the smallest number of parts, both colors must appear on edges incident to vertices in each part. Perhaps one of these colors is red but let blue be one of these colors that is not red. Since blue appears on edges between $T$ and $G'$, no vertex in $G'$ can have more than $m$ incident blue edges, meaning that all parts of the partition of $G'$ have order at most $m - 1$. More generally, within $G'$, every vertex in $G'$ has at most $m-1$ incident blue edges, at most $m - 2$ incident edges of colors other than blue or red (inside the parts of the Gallai partition), and so at least $|G'| - (2m - 2) \geq 2n + 2m + 10$ incident red edges. Any choice of two vertices in $G'$ with a red edge between them forms the centers of a red copy of $S(n, m)$, a contradiction to complete the proof of Claim~\ref{Claim12gen}.
\end{proof}

Let $v$ be a vertex of $G'$ and suppose $v$ has at least as many incident red edges (within $G'$) as blue edges. Then $v$ has at least $\frac{\ell - (k - 2)m}{2} \geq n + 2m + 4$ incident red edges, say with corresponding vertices $v_{1},v_{2},\dots,v_{n+2m+4}$. The following fact is an easy analogue of Claim~\ref{Claim13}.

\begin{fact}\label{Fact13}
For any $v_{i}$ with $1\leq i\leq n+m+1$, the total number of red edges incident to $v_{i}$ is at most $m$.
\end{fact}

From Fact~\ref{Fact13}, for any $v_{i}$ with $1\leq i\leq n+2m+4$, the number of blue edges incident to $v_{i}$ in $G'$ is at least $2n+4m+ 8$. Choose two such vertices, say $v_i,v_{j}$, such that the edge $v_iv_{j}$ is blue. Then there is a blue copy of $S(n,m)$ centered at $v_{i}$ and $v_{j}$, a contradiction to complete the proof.
\end{proof}

\end{document}